\newcommand{\im}{\text{im}\hspace{2pt}}
\renewcommand{\scr}[1]{\mathscr{#1}}
\newcommand{\bb}[1]{\mathbb{#1}}
\DeclareMathOperator{\Ext}{\text{Ext}}
\newcommand{\Z}{\bb{Z}}
\newcommand{\Q}{\bb{Q}}
\newcommand{\F}{\bb{F}}
\newcommand{\tBP}[1]{BP\langle #1\rangle}
\newcommand{\wt}{\mathrm{wt}}
\newcommand{\AF}{\mathrm{AF}}
\theoremstyle{plain}
\newtheorem{thm}{Theorem}
\newtheorem{prop}{Proposition}
\newtheorem{lem}{Lemma}
\newtheorem{cor}{Corollary}
\theoremstyle{definition}
\newtheorem{ex}{Example}
\title{A new basis for the complex $K$-theory cooperations algebra}
\author{Dominic Leon Culver}\address{University of Notre Dame}\email{dculver@nd.edu}
\date{\today}
\begin{document}

\maketitle

\begin{abstract}
A classical theorem of Adams, Harris, and Switzer states that the 0th grading of complex $K$-theory cooperations, $KU_0ku$ is isomorphic to the space of numerical polynomials. The space of numerical polynomials has a basis provided by the binomial coefficient polynomials, which gives a basis of $KU_0ku$. 

In this paper, we produce a new $p$-local basis for $KU_0ku_{(p)}$ using the Adams splitting. This basis is established by using well known formulas for the Hazewinkel generators. For $p=2$, we show that this new basis coincides with the classical basis modulo higher Adams filtration.
\end{abstract}
\tableofcontents

\section{Introduction}

The cooperations algebra $KU_*KU$ was originally computed by Adams, Harris, and Switzer in \cite{K1971}. They show that $KU_*KU$ is torsion free, and hence the map
\[
KU_*KU\to KU_*KU\otimes\Q\simeq \Q[u^{\pm 1},v^{\pm 1}]
\]
is monic. They determine the image of this map, described in the following theorem.

\begin{thm}[Adams-Harris-Switzer, \cite{K1971}]
The map 
\[
KU_*KU\to KU_*KU\otimes \Q
\]
gives an isomorphism between $KU_*KU$ and the ring of finite Laurent series $f(u,v)$ which satisfy the following condition: for any nonzero integers $h,k$ we have 
\[
f(h\beta,k\beta)\in \Z[\beta^{\pm 1}, h^{-1}, k^{-1}].
\]
\end{thm}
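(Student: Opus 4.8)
The plan is to realize $KU_*KU$ as a subring of its rationalization $\Q[u^{\pm 1},v^{\pm 1}]$ and then to characterize the image by integrality conditions that come from the Adams operations. First I would check that the displayed map is injective, i.e.\ that $KU_*KU$ is torsion-free. The clean route uses that the even representing spaces of $KU$ are $\Z\times BU$, together with the fact that $BU$ admits a $CW$-structure with cells only in even dimensions. Writing $KU\simeq\colim_n\Sigma^{-2n}\Sigma^\infty(\Z\times BU)$ along the Bott maps, one computes $KU_*(\Z\times BU)$ by the Atiyah--Hirzebruch spectral sequence: the $E_2$-page is concentrated in even total degree, so it collapses and leaves a \emph{free} $KU_*$-module. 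Hence $KU_*KU$ is a filtered colimit of free $KU_*$-modules, in particular torsion-free, and the map to the rationalization is monic. Rationally $KU_\Q$ splits as a product of even suspensions of $H\Q$, so $KU_*KU\otimes\Q\cong\Q[u^{\pm 1},v^{\pm 1}]$, where $u=\eta_L(\beta)$ and $v=\eta_R(\beta)$ are the images of the Bott class under the two unit maps.

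Next I would prove that the stated conditions are \emph{necessary}. For nonzero integers $h,k$ the stable Adams operations $\psi^h,\psi^k$ are ring maps on $KU[1/h]$ and $KU[1/k]$ acting on $\pi_{2n}$ by multiplication by $h^n$ and $k^n$ respectively. Consider the composite
\[
KU\wedge KU\xrightarrow{\ \psi^h\wedge\psi^k\ }KU[1/h]\wedge KU[1/k]\xrightarrow{\ \mu\ }KU[1/hk].
\]
Since $\mu$ identifies both Bott classes while $\psi^h,\psi^k$ rescale them, the induced map on homotopy is the substitution $f(u,v)\mapsto f(h\beta,k\beta)$: a monomial $u^av^b$ goes to $h^ak^b\beta^{a+b}$. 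As the source class is integral, its image must lie in $\pi_*(KU[1/hk])=\Z[\beta^{\pm 1},h^{-1},k^{-1}]$, which is exactly the condition in the statement. This shows $KU_*KU$ is contained in the ring $R$ of finite Laurent series satisfying all the $(h,k)$-conditions.

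The main obstacle is \emph{sufficiency}: showing every $f\in R$ already lies in $KU_*KU$, so that the inclusion $KU_*KU\subseteq R$ is an equality. Here I would argue degree by degree. By Bott periodicity it is enough to treat the internal degree zero, where $f(u,v)=g(u/v)$ with $g\in\Q[w^{\pm 1}]$ for $w=u/v$, and the conditions become the single requirement that $g(h/k)\in\Z[h^{-1},k^{-1}]$ for all nonzero $h,k$ — that is, $g$ is a numerical Laurent function. The task is then to match $R_0$ against an explicit description of $KU_0KU$ as a free $KU_*$-module. I would exhibit integral classes in $KU_0KU$ realizing a spanning family of $R_0$ — for instance the classes arising from $KU_*(\CP^n)$ and lens spaces, which produce the binomial basis of numerical polynomials — and then a rank count in each degree forces $R=KU_*KU$.

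The two delicate points, and where I expect the real work to lie, are (i) verifying that these binomial/numerical classes are genuinely present in $KU_*KU$ and not merely in its rationalization, which requires the computation of $KU_*(BU)$ as a free $KU_*$-module together with its comultiplication; and (ii) checking \emph{completeness} of the conditions, namely that evaluating at all pairs $(h\beta,k\beta)$ imposes no constraints beyond those captured by the spanning family, so that the necessary conditions are also sufficient. Point (ii) is the crux: it amounts to showing that an $f$ passing every $(h,k)$-test can be written integrally in the chosen basis, which reduces to a purely arithmetic statement about numerical Laurent functions and their integrality at all rationals $h/k$.
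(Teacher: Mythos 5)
This theorem is quoted in the paper as a classical result of Adams--Harris--Switzer and is not proved there, so there is no internal argument to compare against; I am judging your proposal against the original proof it cites. Your first two steps are correct and follow the standard route: torsion-freeness of $KU_*KU$ via the even-cell structure on $\Z\times BU$, the collapsing Atiyah--Hirzebruch spectral sequence, and the presentation of $KU\wedge KU$ as a filtered colimit of (shifts of) free $KU_*$-modules; and necessity of the integrality conditions via the composite $\mu\circ(\psi^h\wedge\psi^k)$, which does send $f(u,v)$ to $f(h\beta,k\beta)\in\pi_*KU[1/hk]=\Z[\beta^{\pm1},h^{-1},k^{-1}]$. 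The reduction to internal degree zero by Bott periodicity, where the condition becomes $g(h/k)\in\Z[h^{-1},k^{-1}]$ for the numerical Laurent function $g(w)=f(u,v)$ with $w=u/v$, is also correct.

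The gap is in sufficiency, and while you have located it accurately you have not closed it, and the mechanism you propose for closing it does not work as stated. A ``rank count in each degree'' cannot force $KU_*KU=R$: two free abelian groups of the same rank (even finite rank in each filtration stage) can be properly nested with nontrivial index, so exhibiting integral classes that span $R_0\otimes\Q$ and comparing ranks only recovers the inclusion $KU_*KU\subseteq R$ that you already have from necessity. What is actually required is the arithmetic assertion that the explicit integral classes coming from $KU_*(\CP^\infty)$ and $KU_*(BU)$ --- the analogues of the binomial-coefficient polynomials adapted to the two-variable condition --- form a $\Z$-basis of $R$ itself, i.e., that every finite Laurent series passing all the $(h,k)$-tests is an \emph{integral} combination of them. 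This is precisely the content of the Adams--Harris--Switzer argument (and, in the $p$-local degree-zero form, of the basis theorem for numerical polynomials that the rest of the paper builds on). Your point (ii) names this step but supplies no argument for it, so as written the proposal establishes $KU_*KU\subseteq R$ but not the reverse inclusion; the proof is incomplete at its decisive step.
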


If we are working with the $2$-local complex $K$-theory spectrum $KU$, then we can rewrite this condition as 
\[
KU_0KU_{(2)}\simeq \{f(w)\in \Q[w^{\pm 1}]\mid f(k)\in \Z_{(2)}\, \text{ for all }k\in \Z_{(2)}^{\times}\}
\]
where $w:= v/u$. Since $KU$ is an even periodic ring spectrum, this determines the entire algebra $KU_*ku$. An elegant proof of this fact using an arithmetic square can be found in \cite{BOSS}. In particular, this method allows one to calculate 
\[
KU_0ku_{(2)} = \{g(w)\in \Q[w]\mid g(k)\in \Z_{(2)}\, \text {for all } k\in \Z_{(2)}^{\times}\}
\]
which is known as the space of \emph{2-local semistable numerical polynomials}. This is related to the space of \emph{2-local numerical polynomials}:
\[
A:=\{h(x)\in \Q[x]\mid h(k)\in \Z_{(2)} \text{ for all }k\in \Z_{(2)}\}
\]
via the following change of coordinates
\[
\Z_{(2)}\to \Z_{(2)}^\times;\, k\mapsto 2k+1
\]
A classical result is that the ring $A$ of numerical polynomials is a free $\Z_{(2)}$-module with basis given by the \emph{binomial coefficient polynomials}
\[
p_n(x):= {x\choose n} = \frac{x(x-1)\cdots (x-n+1)}{n!}.
\]
Via the change of coordinates above, we obtain a basis for $KU_0ku$,
\[
g_n(w) = \frac{(w-1)(w-3)\cdots (w-(2n-1))}{2^nn!}.
\]
At any prime $p$, another basis for $KU_0ku_{(p)}$ is discussed by Baker in \cite{p-adic} and \cite{baker-basis}. In these papers, Baker gives a different basis for $KU_0ku_{(p)}$ where the role of the polynomials $p_n(x)$ are replaced by a sequence of Teichm\"uller characters, and he recovers a recursive formula.

When localizing at an odd prime $p$, $KU$ splits as a wedge of suspensions of the \emph{Johnson-Wilson theory} $E(1)$. The homotopy groups of this spectrum are 
\[
\pi_*(E(1)) = \Z_{(p)}[v_1^{\pm 1}].
\]
The connective cover $ku_{(p)}$ splits as a wedge of suspensions of the \emph{truncated Brown-Peterson spectrum} $\tBP{1}$. The homotopy groups of this spectrum are 
\[
\pi_*(\tBP{1}) = \Z_{(p)}[v_1].
\]
When the prime is 2, then the spectra $E(1)$ and $KU_{(2)}$ are equivalent, as are the spectra $\tBP{1}$ and $ku_{(2)}$. Using the K\"unneth spectral sequence, it is shown in \cite{coop_Adams_summand} that
\begin{equation*}
\begin{split}
E(1)_*\tBP{1} &\simeq E(1)_*\otimes_{BP_*} BP_*BP\otimes_{BP_*}\tBP{1}_* \\
		      &\simeq E(1)_*[t_1, t_2, \ldots]/(\eta_R(v_2),\eta_R(v_3), \ldots) 
\end{split}
\end{equation*}
where the $v_i$ denote the Hazewinkel generators for $BP_*$ and $\eta_R$ denotes the right unit for the Hopf algebroid $(BP_*,BP_*BP)$. The splitting of $KU_{(p)}$ at odd primes $p$ gives a map
\begin{equation}\label{map_phi}
\varphi: E(1)_*\tBP{1}\to KU_*ku_{(p)}
\end{equation}
obtained by including the summand. At the prime 2, this map is an isomorphism. 

In this paper, we use the mod $p$ Adams spectral sequence for the spectrum $E(1)\wedge \tBP{1}$ to determine a basis for $E(1)_0\tBP{1}$ in terms of the generators $t_i$. Using the map $\varphi$, we find what semistable numerical polynomials these basis elements correspond to. More specifically, if we set
\[
\varphi_n:= \varphi\left(v_1^{-\frac{p^n-1}{p-1}}t_n\right)
\]
then we determine an inductive formula determining the $\varphi_n$'s. The basis for $E(1)_0\tBP{1}$ will then be the set of certain monomials on the $\varphi_n$'s. This inductive formula stems from formulas for the right unit, $\eta_R$, on the Hazewinkel generators $v_i$. Strangely, these inductive formulas bear a striking resemblance to those of Baker in \cite{p-adic}. The author does not know how these bases are related.

After determining a basis for $E(1)_0\tBP{1}$ at all primes, we focus on the prime 2, in which case $\varphi$ becomes an isomorphism, giving us a new basis for $KU_0ku$. We compare this new basis with the one provided by the $g_n$'s. In particular, it will be shown that the $g_n$-basis and the one produced here will be the same modulo higher Adams filtration. Our basis has the advantage that it is tightly connected to $BP$-theory and the Steenrod algebra. Moreover, our techniques furnish a basis for $E(1)_0\tBP{1}$ at odd primes, which could not be obtained before by the result of Adams-Switzer-Harris.

\subsection*{Conventions} We will write $\zeta_i$ for the conjugates of the polynomial generators $\xi_i$ in the dual Steenrod algebra. When given a prime $p$, we will write $H_*(-)$ for the functor $H_*(-;\F_p)$. We will write $\Ext_{\scr{A}_*}(M)$ for $\Ext_{\scr{A}_*}(\F_p,M)$ when $M$ is a comodule over the dual Steenrod algebra. We will also write $\Ext_{\scr{E}(1)_*}(M)$ for $\Ext_{\scr{E}(1)_*}(\F_p,M)$ when $M$ is a comodule over the Hopf algebra $\scr{E}(1)_* = E(Q_0,Q_1)_*$. If $X$ is a spectrum, we will write $M_*(X;Q_i)$ for the Margolis homology groups $M_*(H_*X;Q_i)$.

\section{Adams spectral sequence calculation of $E(1)_*\tBP{1}$}
%\textit{In this section we will work in the 2-local category unless specifically stated otherwise.}

We begin by reviewing the calculation of $ku_*ku_{(2)}$ in terms of the Adams spectral sequence
\begin{equation}\label{ASS_kuku}
\Ext_{\scr{A}_*}(H_*(ku\wedge ku))\implies ku_*ku^{\wedge}_2.
\end{equation}
The details of this calculation can be found in \cite{bluebook}. Recall that 
\[
H_*(ku)\simeq (\scr{A}\sslash \scr{E}(1))_*
\]
where $\scr{E}(1)$ denotes the subalgebra of the Steenrod algebra $\scr{A}$ which is generated by the Milnor primitives $Q_0$ and $Q_1$. Thus a change-of-rings shows that the spectral sequence is of the form
\[
\Ext_{\scr{E}(1)}((\scr{A}\sslash \scr{E}(1))_*)\implies ku_*ku^{\wedge}_2.
\]
An important invariant needed in calculating $\Ext$ over the Hopf algebra $\scr{E}(1)$ is the \emph{Margolis homology}. If $X$ is a module over $\scr{E}(1)$, then as $\scr{E}(1)$ is an exterior algebra on $Q_0$ and $Q_1$, the actions by $Q_i$ square to zero, so we may regard $X$ as a chain complex with differentials $Q_i$. We define the \emph{Margolis homology group with respect to $Q_i$} to be 
\[
M_*(X;Q_i):= \ker Q_i/\im Q_i
\]
i.e., the homology of $X$ with respect to the differential $Q_i$. An easy calculation (cf. \cite{bluebook}) shows that 
\[
M_*(ku;Q_0)\simeq P(\zeta_1^2)
\]
and 
\[
M_*(ku;Q_1)\simeq E(\zeta_1^2,\zeta_2^2, \zeta_3^2, \ldots).
\]
There is a \emph{weight filtration} on $\scr{A}_*$ given by setting
\[
\wt(\zeta_k) = 2^{k-1}
\]
and extending to general monomials by 
\[
\wt(xy) = \wt(x)+\wt(y).
\]
The weight filtration gives an algebraic decomposition
\[
(\scr{A}\sslash \scr{E}(1))_*\simeq \bigoplus_{k=0}^\infty M_1(k)
\]
where the $M_1(k)$ denote the subspaces spanned by monomials whose weight is exactly equal to $2k$. These turn out to be subcomodules and they are the homology of the \emph{integral Brown-Gitler spectra}. The Margolis homology of the subcomodules $M_1(k)$ have an interesting property.
\begin{prop}
The Margolis homology groups of $M_1(k)$ are the subspaces of the Margolis homology of $(\scr{A}\sslash \scr{E}(1))_*$ spanned by the weight $2k$ monomials. In particular 
\[
M_*(M_1(k);Q_0) = \F_2\{\zeta_1^{2k}\}
\]
and if the binary expansion of $k$ is 
\[
k= k_0+k_12+k_22^2+\cdots
\]
then 
\[
M_*(M_1(k);Q_1) = \F_2\{\zeta_1^{2k_0}\zeta_2^{2k_1}\zeta_3^{2k_2}\cdots\}.
\]
In particular, the Margolis homology groups of $M_1(k)$ are one dimensional.
\end{prop}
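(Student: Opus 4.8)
The plan is to exploit the fact that the Milnor primitives $Q_0$ and $Q_1$ act as weight-homogeneous derivations on $(\scr{A}\sslash\scr{E}(1))_*$, so that the weight decomposition $(\scr{A}\sslash\scr{E}(1))_* \simeq \bigoplus_{k} M_1(k)$ is simultaneously a decomposition of $Q_0$- and $Q_1$-chain complexes. Since taking homology commutes with direct sums, this immediately yields
\[
M_*\big((\scr{A}\sslash\scr{E}(1))_*; Q_i\big) \simeq \bigoplus_{k=0}^\infty M_*(M_1(k); Q_i),
\]
and each summand $M_*(M_1(k); Q_i)$ is precisely the weight-$2k$ part of the total Margolis homology. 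The proposition then reduces to reading off the weight-$2k$ part of the already-known answers $M_*(ku;Q_0)\simeq P(\zeta_1^2)$ and $M_*(ku;Q_1)\simeq E(\zeta_1^2, \zeta_2^2, \ldots)$.

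First I would establish the weight-homogeneity. Recall that $Q_i$ acts as a derivation on the dual Steenrod algebra with $Q_i(\zeta_n) = \zeta_{n-i-1}^{2^{i+1}}$ (convention $\zeta_0 = 1$). On a generator lying in $(\scr{A}\sslash\scr{E}(1))_* = \F_2[\zeta_1^2, \zeta_2^2, \zeta_3, \zeta_4, \ldots]$ one checks directly that
\[
\wt\big(Q_i(\zeta_n)\big) = 2^{i+1}\cdot 2^{n-i-2} = 2^{n-1} = \wt(\zeta_n),
\]
while the two degenerate cases $Q_0(\zeta_1)$ and $Q_1(\zeta_2)$ do not occur in the subalgebra, since there only the squares $\zeta_1^2, \zeta_2^2$ appear and these are $Q_i$-cycles. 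Because $Q_i$ is a derivation and $\wt$ is additive on products, $Q_i$ preserves weight on all of $(\scr{A}\sslash\scr{E}(1))_*$; in particular each $M_1(k)$ is $Q_i$-stable and the Margolis homology inherits a weight grading.

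It then remains to isolate the weight-$2k$ summand in each case. For $Q_0$, the total homology $P(\zeta_1^2)$ has $\F_2$-basis $\{\zeta_1^{2j} : j\ge 0\}$, and $\wt(\zeta_1^{2j}) = 2j$, so the unique basis element of weight $2k$ is $\zeta_1^{2k}$; this gives $M_*(M_1(k); Q_0) = \F_2\{\zeta_1^{2k}\}$. For $Q_1$, the total homology $E(\zeta_1^2, \zeta_2^2, \ldots)$ has $\F_2$-basis the squarefree monomials $\prod_i (\zeta_i^2)^{\epsilon_i}$ with $\epsilon_i \in \{0,1\}$, of weight $\sum_i \epsilon_i 2^i = 2\sum_i \epsilon_i 2^{i-1}$. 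Setting this equal to $2k$ forces $\sum_i \epsilon_i 2^{i-1} = k$, and uniqueness of the binary expansion $k = k_0 + k_1 2 + k_2 2^2 + \cdots$ identifies the exponents as $\epsilon_i = k_{i-1}$. Hence the unique weight-$2k$ basis element is $\zeta_1^{2k_0}\zeta_2^{2k_1}\zeta_3^{2k_2}\cdots$, giving $M_*(M_1(k); Q_1) = \F_2\{\zeta_1^{2k_0}\zeta_2^{2k_1}\zeta_3^{2k_2}\cdots\}$. In both cases the answer is one-dimensional.

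The one genuinely load-bearing step is the weight-homogeneity of $Q_0$ and $Q_1$: once the weight decomposition is known to be a decomposition of chain complexes, everything else is a uniqueness-of-binary-expansion bookkeeping argument. I expect the only subtlety to be the careful handling of the degenerate cases $Q_i(\zeta_{i+1}) = 1$, which a priori would violate weight-homogeneity but are harmless here precisely because $\zeta_1$ and $\zeta_2$ are absent from $(\scr{A}\sslash\scr{E}(1))_*$ and their squares are cycles.
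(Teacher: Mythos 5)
Your proof is correct, and it is essentially the argument the paper intends: the paper states this proposition without proof (deferring to Adams' computation of the total Margolis homology and the weight decomposition into the $M_1(k)$), and your observation that $Q_0$ and $Q_1$ are weight-preserving derivations on $(\scr{A}\sslash \scr{E}(1))_*$ --- so that the Margolis homology splits along the weight grading and the weight-$2k$ pieces of $P(\zeta_1^2)$ and $E(\zeta_1^2,\zeta_2^2,\ldots)$ can simply be read off via uniqueness of binary expansions --- is exactly the standard way to fill in that gap. Your handling of the degenerate cases $Q_0(\zeta_1)$ and $Q_1(\zeta_2)$, which are irrelevant because only $\zeta_1^2$ and $\zeta_2^2$ occur in the subalgebra and squares are annihilated by derivations in characteristic $2$, is the one point that genuinely needs to be said, and you said it.
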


Adams was able to show in \cite{bluebook} that, since the Margolis homology of the $M_1(k)$ are one dimensional, there is an isomorphism 
\[
M_1(k)^*\simeq \overline{\scr{E}(1)}^{\otimes k-\alpha(k)}\oplus F
\]
where $\overline{\scr{E}(1)}$ denotes the augmentation ideal of $\scr{E}(1)$, $F$ is some free $\scr{E}(1)$-module, and $\alpha(k)$ denotes the number of 1's in the dyadic expansion of $k$. Thus, 
\[
\Ext_{\scr{E}(1)_*}(M_1(k))/tors\simeq \Ext_{\scr{A}_*}(H_*(ku^{\langle k-\alpha(k)\rangle}))
\]
where $ku^{\langle i\rangle}$ denotes the $i$th Adams cover of $ku$. From this it follows that the Adams spectral sequence \eqref{ASS_kuku} collapses at $E_2$. 

The algebra $KU_*ku_{(2)}$ is obtained from $ku_*ku_{(2)}$ by inverting the element $v_1$, thus it is the direct sum of the modules
\[
v_1^{-1}\Ext_{\scr{E}(1)}(M_1(k)).
\]
We will now calculate these $v_1$-inverted $\Ext$-groups. Here is an example of the Adams chart for $v_1^{-1}\Ext(M_1(4))$.

\begin{ex}
We will calculate $v_1^{-1}\Ext_{\scr{E}(1)_*}(M_1(4))$ and find a $\Z_{(2)}$-generator in degree 8. Here is a picture of the Adams chart.
\begin{center}
\begin{sseq}[entrysize=8mm]{8...16}{-3...2}
\ssmoveto{8}{0}
\ssdrop{\bullet}
\ssdroplabel{\zeta_1^8}
\ssvoidarrow 01
\ssline 21
\ssdrop{\bullet}
\ssvoidarrow 01
\ssline{0}{-1}
\ssdrop{\bullet}
\ssdroplabel{\zeta_1^4\zeta_2^2}
\ssvoidarrow 01
\ssline 21
\ssdrop{\bullet}
\ssvoidarrow 01
\ssline{0}{-1}
\ssdrop{\bullet}
\ssdroplabel{\zeta_2^4}
\ssline 21
\ssdrop{\bullet}
\ssvoidarrow 01
\ssline{0}{-1}
\ssdrop{\bullet}
\ssdroplabel{\zeta_3^2}
\ssvoidarrow 21
\ssline{-2}{-1}
%\ssgrayout[red]
\ssdrop{\bullet}
\ssline 01
\ssmove{0}{-1}
\ssline {-2}{-1}
\ssdrop{\bullet}
\ssline 01
\ssdrop{\bullet}
\ssline 01
\ssmove{0}{-2}
\ssline{-2}{-1}
\ssdrop{\bullet}
\ssline01
\ssdrop{\bullet}
\ssline01
\ssdrop{\bullet}
\ssline01
\end{sseq}
\end{center}
This picture is obtained by drawing the Adams chart for $\Ext(M_1(4))$ and then drawing $v_1^{-1}$-towers on each dot on the 0-line. In this example, we see that the relations give $2^3v_1^{-3}\zeta_3^2 = \zeta_1^8$. This shows that the group $v_1^{-1}\Ext^{s,s+8}_{\scr{E}(1)}(M_1(4))$ is generated over $\Z_{(2)}$ by $v_1^{-3}\zeta_1^8$. This also shows that the contribution of $v_1^{-1}\Ext_{\scr{E}(1)}(M_1(4))$ to $KU_0ku$ is the free $\Z_{(2)}$-module generated by $v_1^{-7}\zeta_3^2$.
\end{ex}

\begin{prop}\label{basis_Ext}
Let $k=k_0+k_12+k_22^2+\cdots $ be a natural number, then as a module over $\Z_{(2)}[v_1^{\pm 1}]$, the modules $v_1^{-1}\Ext_{\scr{E}(1)_*}(M_1(k))$ are generated by $v_1^{k-\alpha(k)}\zeta_1^{2k_0}\zeta_2^{2k_1}\cdots$ %where 
%\[
%f(k) = \left(\sum_{i=0}^\infty k_i(2^{i+1}-1)\right)-k = k-\alpha(k)
%\]
\end{prop}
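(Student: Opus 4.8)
The plan is to reduce everything to the structural results already recorded: the Adams splitting $M_1(k)^*\cong\overline{\scr{E}(1)}^{\otimes(k-\alpha(k))}\oplus F$ with $F$ free, the collapse of the Adams spectral sequence together with the identification $\Ext_{\scr{E}(1)_*}(M_1(k))/\mathrm{tors}\cong\Ext_{\scr{A}_*}(H_*ku^{\langle k-\alpha(k)\rangle})$, and the computation of the Margolis homology of $M_1(k)$. Write $m=k-\alpha(k)$.

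First I would dispose of the free summand. Since $\scr{E}(1)$ is an exterior, hence self-injective, algebra, $\Ext_{\scr{E}(1)_*}(F)$ is concentrated in filtration $0$ for any free $F$; consequently $v_0$ and $v_1$ act by zero on it and $v_1^{-1}\Ext_{\scr{E}(1)_*}(F)=0$. Additivity of $\Ext$ then gives
\[
v_1^{-1}\Ext_{\scr{E}(1)_*}(M_1(k))\cong v_1^{-1}\Ext_{\scr{E}(1)_*}(\overline{\scr{E}(1)}^{\otimes m}).
\]
The right-hand side can be computed by running the long exact sequence attached to $0\to\overline{\scr{E}(1)}\to\scr{E}(1)\to\F_2\to 0$: freeness of $\scr{E}(1)$ kills its $v_1$-inverted $\Ext$, so the connecting map gives an isomorphism $v_1^{-1}\Ext_{\scr{E}(1)_*}(\overline{\scr{E}(1)})\cong v_1^{-1}\Ext_{\scr{E}(1)_*}(\F_2)$ up to a shift, and iterating $m$ times exhibits $v_1^{-1}\Ext_{\scr{E}(1)_*}(\overline{\scr{E}(1)}^{\otimes m})$ as free of rank one over $v_1^{-1}\Ext_{\scr{E}(1)_*}(\F_2)=\F_2[v_0,v_1^{\pm1}]$. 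Alternatively—and more in keeping with what is already in hand—the collapse of the spectral sequence and the identification with $\Ext_{\scr{A}_*}(H_*ku^{\langle m\rangle})$, combined with the fact that the target $KU_*ku_{(2)}$ is torsion-free, show directly that $v_1^{-1}\Ext_{\scr{E}(1)_*}(M_1(k))$ is a free $\Z_{(2)}[v_1^{\pm1}]$-module of rank one.

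It remains to pin down the generator, which is the genuine content of the statement. After inverting $v_1$ the module is detected in filtration $0$ by the $Q_1$-Margolis homology, which by the preceding Proposition is the one-dimensional space $\F_2\{\zeta_1^{2k_0}\zeta_2^{2k_1}\cdots\}$; thus $\zeta_1^{2k_0}\zeta_2^{2k_1}\cdots$, and hence any $v_1$-power of it, is a generator. To obtain the displayed normalization I would match the bottom class of the tower: under $\Ext_{\scr{E}(1)_*}(M_1(k))/\mathrm{tors}\cong\Ext_{\scr{A}_*}(H_*ku^{\langle m\rangle})$ the generator $1$ of $v_1^{-1}\Ext_{\scr{A}_*}(H_*ku)\cong\Z_{(2)}[v_1^{\pm1}]$ corresponds to the fundamental class of $ku^{\langle m\rangle}$, which sits in filtration $m=k-\alpha(k)$; since $\zeta_1^{2k_0}\zeta_2^{2k_1}\cdots$ lies in filtration $0$, this fundamental class must be $v_1^{k-\alpha(k)}\zeta_1^{2k_0}\zeta_2^{2k_1}\cdots$. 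As a consistency check one verifies the hidden $v_0$-extension relating the two Margolis classes, $v_1^{k-\alpha(k)}\zeta_1^{2k}=2^{k-\alpha(k)}\,\zeta_1^{2k_0}\zeta_2^{2k_1}\cdots$, which in the case $k=4$ recovers the relation $2^3v_1^{-3}\zeta_3^2=\zeta_1^8$ of the worked example and exhibits the $Q_0$-Margolis class $\zeta_1^{2k}$ as $2^{k-\alpha(k)}$ times the generator.

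The main obstacle is this final identification. The homological-algebra steps produce a rank-one free module with essentially no effort, but naming the generator as the explicit monomial $\zeta_1^{2k_0}\zeta_2^{2k_1}\cdots$ with precisely the power $v_1^{k-\alpha(k)}$ requires carefully tracking internal degrees and filtrations through the Margolis-homology identification and through the $v_0$-tower linking the $Q_0$- and $Q_1$-Margolis classes. The degree bookkeeping ($|\zeta_1^{2k}|=2k$, $|\zeta_1^{2k_0}\zeta_2^{2k_1}\cdots|=4k-2\alpha(k)$, $|v_0|=(1,1)$, $|v_1|=(1,3)$) is what forces the exponents in the hidden extension and hence the stated normalization.
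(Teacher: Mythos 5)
Your argument is correct and is exactly the one the paper intends: the paper does not actually write out a proof of this proposition, relying instead on the preceding structural facts (one-dimensional Margolis homologies, Adams's classification $M_1(k)^*\cong\overline{\scr{E}(1)}^{\otimes(k-\alpha(k))}\oplus F$, and the Adams-cover identification) together with the $k=4$ example as illustration, and your write-up is a faithful expansion of precisely that reasoning, including the only genuinely delicate point, namely pinning down the generator via the two Margolis classes and the hidden $v_0$-extension. Your relation $v_1^{k-\alpha(k)}\zeta_1^{2k}=2^{k-\alpha(k)}\zeta_1^{2k_0}\zeta_2^{2k_1}\cdots$ specializes at $k=4$ to the paper's $2^3v_1^{-3}\zeta_3^2=\zeta_1^8$, so the normalization and degree bookkeeping check out.
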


Recall that in the Adams spectral sequence for $BP_*BP$,  
\[
\Ext_{\scr{E}_*}(P(\zeta_1^2,\zeta_2^2,\zeta_3^2, \ldots ))\implies BP_*BP,
\]
the elements $t_i\in BP_*BP$ are detected by $\zeta_i^2$. Since 
\begin{equation*}
\begin{split}
E(1)_*\tBP{1} &\simeq E(1)_*\otimes_{BP_*} BP_*BP\otimes_{BP_*} BP\langle 1\rangle \\
                      &\simeq E(1)_*[t_1, t_2, \ldots]/(\eta_R(v_2), \eta_R(v_3), \ldots )
\end{split}
\end{equation*}
the elements $\zeta_i^2$ in the Adams spectral sequence for $E(1)_*\tBP{1}$ detect $t_i$. With this notation, we conclude\footnote{Note since $KU_0ku$ has no divisible summands, a set of elements of $KU_0ku_{(2)}$ is a basis if and only if it is a basis of $KU_0ku^{\wedge}_2$.}

\begin{cor}\label{basis_Ext_cor_2}
Let $\varphi_n = v_1^{-2^n+1}t_n$ in $KU_0ku_{(2)}$. The following monomials
\[
\varphi_1^{\epsilon_1}\varphi_2^{\epsilon_2}\cdots
\]
with $\epsilon_j\in \{0,1\}$ forms a basis for the free $\Z_{(2)}$-module $KU_0ku_{(2)}$.
\end{cor}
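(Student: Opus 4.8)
The plan is to deduce the statement from Proposition \ref{basis_Ext} by passing to the associated graded of the Adams filtration and then matching the resulting generators with the monomials in the $\varphi_n$. Since the Adams spectral sequence \eqref{ASS_kuku} collapses at $E_2$ and this collapse persists after inverting the permanent cycle $v_1$, the weight decomposition identifies the $E_\infty$-page computing $KU_*ku_{(2)}$ with $\bigoplus_{k\geq 0} v_1^{-1}\Ext_{\scr{E}(1)_*}(M_1(k))$. Thus the associated graded of the Adams filtration on $KU_0ku_{(2)}$ is the degree-$0$ part of this direct sum. First I would isolate, for each $k$, the degree-$0$ generator of the summand $v_1^{-1}\Ext_{\scr{E}(1)_*}(M_1(k))$: Proposition \ref{basis_Ext} supplies the $\Z_{(2)}[v_1^{\pm 1}]$-generator $v_1^{k-\alpha(k)}\zeta_1^{2k_0}\zeta_2^{2k_1}\cdots$, and since $|\zeta_j^2| = 2^{j+1}-2$ and $|v_1|=2$, one computes that the $\zeta$-monomial $\zeta_1^{2k_0}\zeta_2^{2k_1}\cdots$ has degree $4k-2\alpha(k)$; multiplying by the unique power of $v_1$ landing in degree $0$ therefore yields the $\Z_{(2)}$-generator $v_1^{-2k+\alpha(k)}\zeta_1^{2k_0}\zeta_2^{2k_1}\cdots$ (which recovers $v_1^{-7}\zeta_3^2$ for $k=4$, as in the example).

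Next I would match these generators with the $\varphi$-monomials. Because $t_n$ is detected by $\zeta_n^2$ and $\varphi_n = v_1^{1-2^n}t_n$, the monomial $\varphi_1^{\epsilon_1}\varphi_2^{\epsilon_2}\cdots$ is detected in $E_\infty$ by $v_1^{\sum_n \epsilon_n(1-2^n)}\prod_n \zeta_n^{2\epsilon_n}$. Writing $k := \sum_{n\geq 1}\epsilon_n 2^{n-1}$, so that $k_i = \epsilon_{i+1}$ gives the binary expansion of $k$, a direct bookkeeping shows $\prod_n \zeta_n^{2\epsilon_n} = \zeta_1^{2k_0}\zeta_2^{2k_1}\cdots$ and $\sum_n \epsilon_n(1-2^n) = \alpha(k)-2k$. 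Hence $\varphi_1^{\epsilon_1}\varphi_2^{\epsilon_2}\cdots$ is detected by exactly the degree-$0$ generator of $v_1^{-1}\Ext_{\scr{E}(1)_*}(M_1(k))$ isolated above. As $(\epsilon_n)$ ranges over the finitely supported sequences in $\{0,1\}$, the assignment $k = \sum_n \epsilon_n 2^{n-1}$ is a bijection onto $\N$, so the detecting classes of the $\varphi$-monomials run over a $\Z_{(2)}$-basis of the associated graded.

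Finally I would lift this basis of the associated graded to a basis of $KU_0ku_{(2)}$ itself. Working $2$-complete, as justified by the footnote (a set is a basis of $KU_0ku_{(2)}$ iff it is a basis of $KU_0ku^{\wedge}_2$), the Adams filtration is complete and Hausdorff with free associated graded, so any family whose images form a $\Z_{(2)}$-basis of the associated graded is itself a $\Z_{(2)}$-basis; applying this to the $\varphi$-monomials finishes the argument. The point that genuinely needs care—and which I expect to be the main obstacle—is the multiplicativity of detection: one must check that $\varphi_1^{\epsilon_1}\varphi_2^{\epsilon_2}\cdots$ is actually detected by the product of the individual detecting classes, i.e. that this product is a nonzero permanent cycle in $E_\infty$ with no unexpected jump in Adams filtration. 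This is precisely guaranteed by Proposition \ref{basis_Ext}, which identifies $\zeta_1^{2k_0}\zeta_2^{2k_1}\cdots$ (up to a unit power of $v_1$) as the named generator of the $k$-th summand, hence nonzero; combined with the collapse of the spectral sequence this rules out any filtration jump and secures the identification.
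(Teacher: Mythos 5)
Your proposal is correct and follows essentially the same route as the paper, which deduces the corollary directly from Proposition \ref{basis_Ext}, the fact that $\zeta_i^2$ detects $t_i$, and the footnote reducing to the $2$-complete case; you have simply spelled out the degree bookkeeping, the bijection $k=\sum_n\epsilon_n2^{n-1}$, and the lifting from the associated graded, all of which the paper leaves implicit. No substantive difference in method.
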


At an odd prime $p$, the dual Steenrod algebra is given by 
\[
\scr{A}_* = P(\zeta_1, \zeta_2, \ldots)\otimes E(\overline{\tau}_0,\overline{\tau}_1, \ldots)
\]
and the mod $p$ homology of $\tBP{1}$ is given by 
\[
H_*(\tBP{1}) = (\scr{A}\sslash E(Q_0,Q_1))_*
\]
where the $Q_0, Q_1$ are the Milnor primitives. Concretely this algebra is 
\[
(\scr{A}\sslash E(Q_0,Q_1))_* = P(\zeta_1, \zeta_2, \zeta_3, \ldots)\otimes E(\overline{\tau}_2, \overline{\tau}_3, \ldots).
\]
There is a left action of $E(Q_0,Q_1)$ on $(\scr{A}\sslash E(Q_0, Q_1))_*$ given by 
\begin{equation*}
\begin{split}
Q_i(\overline{\tau}_k) &= \zeta_{k-i}^{p^i}\\
Q_i(\zeta_k)&=0
\end{split}
\end{equation*}
for all $k$. This shows that the Margolis homology of $\tBP{1}$ is 
\begin{equation*}
\begin{split}
M_*(\tBP{1};Q_0) &= P(\zeta_1)\\
M_*(\tBP{1};Q_1) &= P(\zeta_1, \zeta_2, \zeta_3, \ldots)/(\xi_1^p, \xi_2^p, \ldots).
\end{split}
\end{equation*}

Similar to the 2-primary case, one can put a \emph{weight filtration} on $(\scr{A}\sslash E(Q_0,Q_1))_*$ by 
\begin{equation*}
\wt(\zeta_k) = \wt(\tau_k) = p^{k}.
\end{equation*}
If we let $M_1(k)$ denote the subcomodule spanned by the monomials of weight exactly $pk$ then we get an algebraic decomposition
\[
(\scr{A}\sslash E(Q_0,Q_1))_*\simeq \bigoplus_{k=0}^\infty M_1(k).
\]
As in the $2$-primary case, the Margolis homology of the subcomodules $M_1(k)$ are both one-dimensional, which from the classification theorem shows that 
\[
M_1(k)^*\simeq \overline{\scr{E}(1)}^{\frac{k-\alpha_p(k)}{p-1}}\oplus F
\]
where $\alpha_p(k)$ is the sum of the digits in the $p$-adic expansion of $k$ and $F$ is a free module. In particular
\[
\Ext_{E(Q_0,Q_1)}(M_1(k))/tors\simeq \Ext_{\scr{A}_*}\left(H_*\left(\tBP{1}^{\left\langle\frac{k-\alpha_p(k)}{p-1}\right\rangle}\right)\right).
\]
From this it follows that the Adams spectral sequence for $\tBP{1}_*\tBP{1}$ collapses at the $E_2$-page. 

Recall that the Adams spectral sequence for $BP_*BP$ at an odd prime is 
\[
\Ext_{E(\overline{\tau}_0,\overline{\tau}_1, \ldots)}(P(\zeta_1, \zeta_2, \ldots))\implies BP_*BP
\]
and in this spectral sequence the $\zeta_k$ detects $t_k\in BP_*BP$. Thus we shall write $t_k$ for $\zeta_k$. Then a proof similar to the proof of Proposition \ref{basis_Ext} shows that 
\begin{prop}
Let the $p$-adic expansion of $k$ be given by $k = k_0+k_1p+k_2p^2+\cdots$. Then over $\Z_{(p)}[v_1^{\pm 1}]$, the module $v_1^{-1}\Ext_{E(\tau_0,\tau_1)}(\tBP{1})$ is generated by 
\[
v_1^{-\frac{k-\alpha_p(k)}{p-1}}t_1^{k_0}t_2^{k_2}t_3^{k_3}\cdots .
\]
\end{prop}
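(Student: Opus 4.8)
The plan is to run the same argument used for Proposition~\ref{basis_Ext}, now over the odd-primary Hopf algebra $E(Q_0,Q_1)$. The starting point is the classification recorded above: modulo torsion, $\Ext_{E(Q_0,Q_1)}(M_1(k))\simeq \Ext_{\scr{A}_*}(H_*(\tBP{1}^{\langle j\rangle}))$ with $j=\tfrac{k-\alpha_p(k)}{p-1}$, coming from $M_1(k)^*\simeq\overline{\scr{E}(1)}^{\,j}\oplus F$ with $F$ free. The free summand $F$ is injective as a comodule, so it contributes only $v_1$-torsion; hence inverting $v_1$ gives
\[
v_1^{-1}\Ext_{E(Q_0,Q_1)}(M_1(k))\;\simeq\; v_1^{-1}\Ext_{\scr{A}_*}\!\left(H_*(\tBP{1}^{\langle j\rangle})\right),
\]
which by the collapse established above is free of rank one over $\Z_{(p)}[v_1^{\pm1}]$. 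The whole task is then to name a generator.

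First I would locate the two canonical classes coming from Margolis homology. The $Q_1$-computation already recorded gives $M_*(M_1(k);Q_1)=\F_p\{\zeta_1^{k_0}\zeta_2^{k_1}\zeta_3^{k_2}\cdots\}$, and since $\zeta_i$ detects $t_i$ this is exactly the monomial $t_1^{k_0}t_2^{k_1}t_3^{k_2}\cdots$; it is a comodule primitive sitting in filtration $0$, hence an indivisible class, so it already generates $v_1^{-1}\Ext$ over $\F_p[v_1^{\pm1}]$ and, lifting the $v_0$-towers, over $\Z_{(p)}[v_1^{\pm1}]$. Likewise the $Q_0$-Margolis class is $\zeta_1^{k}$, lying at the bottom of the $v_0$-tower. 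It then remains only to fix the power of $v_1$ in the normalization.

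I would pin down that power exactly as in the worked example $2^3v_1^{-3}\zeta_3^2=\zeta_1^8$. The rank-one structure forces a relation $p^{c}v_1^{-c'}\,t_1^{k_0}t_2^{k_1}\cdots=\zeta_1^{k}$ connecting the $Q_1$- and $Q_0$-generators. The power of $v_1$ is then completely determined by a degree count: with $|\zeta_1|=2(p-1)$, $|t_{i+1}|=2(p^{i+1}-1)$ and $|v_1|=2(p-1)$, one computes $\sum_i k_i(p^{i+1}-1)=pk-\alpha_p(k)$, and balancing degrees in the relation forces $c'(p-1)=k-\alpha_p(k)$, i.e. $c'=j$. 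Normalizing so that $t_1^{k_0}t_2^{k_1}\cdots$ is placed in the same internal degree $2k(p-1)$ as $\zeta_1^k$ yields the stated generator $v_1^{-\frac{k-\alpha_p(k)}{p-1}}t_1^{k_0}t_2^{k_1}\cdots$.

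The main obstacle is the accompanying claim that $c=j$, i.e. that the staircase linking the $Q_0$- and $Q_1$-Margolis generators has exactly $j$ rungs. At $p=2$ this is visible directly from the short $\overline{\scr{E}(1)}$-pattern, but for odd $p$ the augmentation ideal $\overline{\scr{E}(1)}$ is larger and its $\Ext$ chart more intricate; I would handle this by using the isomorphism $\Ext_{E(Q_0,Q_1)}(\overline{\scr{E}(1)}^{\,j})/\mathrm{tors}\simeq \Ext_{\scr{A}_*}(H_*(\tBP{1}^{\langle j\rangle}))$ to reduce to the known $v_1$-periodic homotopy of the $j$th Adams cover, whose bottom class sits exactly $j$ Adams filtrations above the $v_1$-periodic generator. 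The remaining points---indivisibility of $t_1^{k_0}t_2^{k_1}\cdots$ and the vanishing of the free summand after inverting $v_1$---are formal, so the substance of the proof is this single height computation together with the degree bookkeeping of the previous paragraph.
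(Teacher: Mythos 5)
Your proposal is correct and follows essentially the same route the paper intends, since the paper merely asserts the result ``by a proof similar to Proposition~\ref{basis_Ext}'', i.e.\ by reading the generator off the decomposition $M_1(k)^*\simeq\overline{\scr{E}(1)}^{\otimes \frac{k-\alpha_p(k)}{p-1}}\oplus F$ and the $v_1$-localized Adams chart of the corresponding Adams cover, exactly as in the worked example $2^3v_1^{-3}\zeta_3^2=\zeta_1^8$. One small caution: ``primitive in filtration $0$, hence indivisible'' is not valid on its own (the filtration-$0$ primitive $\zeta_1^{k}$ is $p$-divisible in the localized module), but you correctly isolate the real point---that the $t$-monomial sits at the bottom of the localized $v_0$-tower, i.e.\ the staircase has exactly $\frac{k-\alpha_p(k)}{p-1}$ rungs---and handle it via the Adams-cover identification together with the degree count.
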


\begin{cor}
Let $\eta_n:= v_1^{-\frac{p^n-1}{p-1}}t_n$. The $\Z_{(p)}$-module $E(1)_0\tBP{1}$ is free with basis given by the monomials
\[
\eta_1^{k_1}\eta_2^{k_2}\cdots 
\]
where each $k_i\in \{0,1, \ldots , p-1\}$.
\end{cor}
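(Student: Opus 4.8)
The plan is to deduce the statement from the preceding Proposition by a degree count, in exact parallel with how Corollary \ref{basis_Ext_cor_2} is obtained from Proposition \ref{basis_Ext} at the prime $2$. The starting point is the weight decomposition $(\scr{A}\sslash E(Q_0,Q_1))_*\simeq\bigoplus_{k\geq 0}M_1(k)$ of comodules. Passing to $v_1$-inverted $\Ext$ and using that the Adams spectral sequence for $\tBP{1}_*\tBP{1}$ collapses at $E_2$, I would split $E(1)_*\tBP{1}$ as a direct sum of the modules $v_1^{-1}\Ext_{E(Q_0,Q_1)}(M_1(k))$ over $\Z_{(p)}[v_1^{\pm 1}]$. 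By the preceding Proposition the $k$-th summand is free of rank one on the generator $g_k:=v_1^{-\frac{k-\alpha_p(k)}{p-1}}t_1^{k_0}t_2^{k_1}t_3^{k_2}\cdots$, where $k=\sum_{j\geq 0}k_jp^j$ is the $p$-adic expansion (so that $t_n$ carries the exponent $k_{n-1}$, consistent with the $Q_1$-Margolis generator of $M_1(k)$).

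First I would compute the internal degree of $g_k$. From $|t_n|=2(p^n-1)$, $|v_1|=2(p-1)$, and $k-\alpha_p(k)=\sum_j k_j(p^j-1)$, a direct calculation telescopes to $|g_k|=2(p-1)k$. Since $|v_1|=2(p-1)$ divides $2(p-1)k$, the degree-$0$ part of the $k$-th summand is a free $\Z_{(p)}$-module of rank one, generated by the unique $v_1$-translate $v_1^{-k}g_k$ of $g_k$ into degree $0$. Summing over $k$, the elements $\{v_1^{-k}g_k\}_{k\geq 0}$ then form a $\Z_{(p)}$-basis of $E(1)_0\tBP{1}$; that a lift of these $E_\infty$-generators is a genuine basis, and not merely an associated-graded basis, follows from freeness together with the collapse, exactly as in the footnote to Corollary \ref{basis_Ext_cor_2}.

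It remains to identify $v_1^{-k}g_k$ with the monomial $\eta_1^{k_0}\eta_2^{k_1}\eta_3^{k_2}\cdots$. Expanding the definition $\eta_n=v_1^{-\frac{p^n-1}{p-1}}t_n$ gives $\prod_{n\geq 1}\eta_n^{k_{n-1}}=v_1^{-\frac{1}{p-1}\sum_j k_j(p^{j+1}-1)}\prod_{n\geq 1}t_n^{k_{n-1}}$, and I would check that the exponent $-\tfrac{1}{p-1}\sum_j k_j(p^{j+1}-1)$ equals $-k-\tfrac{k-\alpha_p(k)}{p-1}$, both being $-\tfrac{kp-\alpha_p(k)}{p-1}$; this is precisely the $v_1$-exponent of $v_1^{-k}g_k$. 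Finally, uniqueness of $p$-adic expansions sets up a bijection between $k\in\N$ and finite digit sequences $(k_0,k_1,\dots)$ with $k_j\in\{0,\dots,p-1\}$, so as $k$ ranges over $\N$ the monomials $\eta_1^{k_1}\eta_2^{k_2}\cdots$ (after the harmless reindexing of dummy exponents) enumerate exactly the degree-$0$ generators found above, which proves the claim.

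The only real work is the degree bookkeeping and the matching of $v_1$-exponents in the last step; the main conceptual point to be careful about is that the splitting and rank-one freeness must be taken over the graded ring $\Z_{(p)}[v_1^{\pm 1}]$, since it is precisely the divisibility $|v_1|\mid |g_k|$ that pins the degree-$0$ generator down to a single monomial in the $\eta_n$.
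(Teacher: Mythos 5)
Your argument is correct and is essentially the paper's (implicit) proof: the corollary is stated there without further argument precisely because it follows from the preceding Proposition by the degree count you carry out, in parallel with how Corollary \ref{basis_Ext_cor_2} follows from Proposition \ref{basis_Ext} at $p=2$. Your bookkeeping checks out — $|g_k|=2(p-1)k$ and both $v_1$-exponents equal $-\tfrac{kp-\alpha_p(k)}{p-1}$ — and you correctly read the Proposition's exponents as $t_n^{k_{n-1}}$ despite the indexing slip in its statement.
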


\section{Relationship to numerical polynomials}

We will now determine the map
\[
\varphi:E(1)_0BP\langle 1\rangle\to KU_0ku
\]
in terms of numerical polynomials. Recall that the homotopy groups of the \emph{integral} complex $K$-theory spectrum are
\[
\pi_*KU = \Z[v^{\pm 1}]
\]
and thus the rational homotopy groups are
\[
\pi_*(KU_\Q) = \Q[v^{\pm 1}].
\]
Thus we get
\[
\pi_*(KU\wedge KU_\Q) = \Q[v^{\pm 1}, u^{\pm1}]
\]
where we let $u$ denote the Bott element coming from the right hand side $KU$. Similarly, the rational homotopy groups of $KU\wedge ku$ is given by 
\[
\pi_*(KU\wedge ku_\Q) = \Q[v^{\pm 1}, u].
\]
Given a prime $p$, the rational homotopy groups of $E(1)\wedge \tBP{1}$ are given by 
\[
\pi_*(E(1)\wedge \tBP{1}_\Q) = \Q[v_1^{\pm 1}, u_1].
\]
Moreover, at a prime $p$, there is a topological splitting
\[
KU_{(p)}\simeq E(1)\vee \Sigma^2E(1)\vee \cdots \vee \Sigma^{2(p-2)}E(1)
\]
and the inclusion 
\[
E(1)\to KU
\]
is given in homotopy by 
\[
\pi_*E(1)\to \pi_*KU_{(p)}; \,v_1\mapsto v^{p-1}.
\]
Thus the morphism 
\[
\varphi:E(1)\wedge \tBP{1}\to KU\wedge ku_{(p)}
\]
is given in rational homotopy by 
\[
\varphi_\Q:E(1)_*\tBP{1}_\Q\to KU_*ku_\Q\,; \, v_1\mapsto v^{p-1}, \, u_1\mapsto u^{p-1}.
\]
Let $w_1:= u_1/v_1$, then under $\varphi_\Q$, we have that 
\[
w_1\mapsto w^{p-1}.
\]
We will now determine the value of $\varphi$ on the monomials
\[
\varphi_n:= \varphi v_1^{-\frac{p^n-1}{p-1}}t_n.
\]
To do this, we will need the following formula which determines the Hazewinkel generators
\[
p\lambda_n = \sum_{0\leq i <n}\lambda_iv_{n-i}^{p^i}
\]
and the formula for the right unit on $\lambda_n$ 
\[
\eta_R(\lambda_n) = \sum_{0\leq i \leq n} \lambda_i t_{n-i}^{p^i}.
\]
One can find proofs of these formulas in part 2 of \cite{bluebook} and in \cite{Hazewinkelbook}. Here the $\lambda_n$ is the coefficient of $x^{p^n}$ in the logarithm for the universal $p$-typical formal group law. We will show

\begin{thm}\label{psi's}
The semistable polynomials $\varphi_n$ are given recursively by 
\[
\varphi_1= \frac{w^{p-1}-1}{p}
\]
and
\[
\varphi_n= \frac{w^{p^n-1}-p^{n-1}\varphi_{n-1}^p - \cdots  - p\varphi_1^{p^{n-1}}-1}{p^n}.
\]
\end{thm}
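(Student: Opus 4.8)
The plan is to solve the right-unit formula for $t_n$ and then push everything through $\varphi_\Q$, where both $\lambda_n$ and $\eta_R(\lambda_n)$ acquire completely explicit images. Setting $\lambda_0=t_0=1$, the formula
\[
\eta_R(\lambda_n)=\sum_{0\le i\le n}\lambda_i t_{n-i}^{p^i}
\]
isolates the top term and gives
\[
t_n=\eta_R(\lambda_n)-\lambda_n-\sum_{i=1}^{n-1}\lambda_i t_{n-i}^{p^i}.
\]
Since $\varphi_n=\varphi_\Q\big(v_1^{-(p^n-1)/(p-1)}t_n\big)$ and $v_1\mapsto v^{p-1}$, the theorem reduces to identifying the images of $\lambda_n$ and $\eta_R(\lambda_n)$ in $KU_*ku_\Q=\Q[v^{\pm 1},u]$ and rewriting the lower $t_{n-i}$ in terms of the $\varphi_j$.

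Next I would compute those images. Under the map $BP_*\to KU_*$ classifying the multiplicative formal group law one has $v_1\mapsto v^{p-1}$ and $v_m\mapsto 0$ for $m\ge 2$ (standard; cf.\ part 2 of \cite{bluebook}). Feeding this into the Hazewinkel recursion $p\lambda_n=\sum_{0\le i<n}\lambda_i v_{n-i}^{p^i}$ kills every summand except $i=n-1$, yielding $p\lambda_n=\lambda_{n-1}v_1^{p^{n-1}}$ and hence, by induction,
\[
\lambda_n\longmapsto \frac{v^{p^n-1}}{p^n}.
\]
The element $\eta_R(\lambda_n)$ is the logarithm coefficient of the \emph{right-hand} formal group law, which maps to the same multiplicative formal group law built on the Bott class $u$ of the right factor; the identical computation therefore gives $\eta_R(\lambda_n)\mapsto u^{p^n-1}/p^n$. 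The main obstacle, as I see it, is precisely this Hopf-algebroid bookkeeping: the coefficients $\lambda_i$ occurring in both displayed formulas map through the left unit (Bott class $v$), while $\eta_R(\lambda_n)$ maps through the right unit (Bott class $u$), so the single expression for $t_n$ genuinely mixes the two Bott elements, and one must track which is which throughout.

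Finally I would substitute and simplify. Using $\varphi_j=v^{-(p^j-1)}t_j$, i.e.\ $t_{n-i}=v^{p^{n-i}-1}\varphi_{n-i}$, one gets $t_{n-i}^{p^i}=v^{p^n-p^i}\varphi_{n-i}^{p^i}$, while $\lambda_i\mapsto v^{p^i-1}/p^i$. Multiplying the formula for $t_n$ by $v^{-(p^n-1)}$, the first two terms produce $v^{-(p^n-1)}\big(u^{p^n-1}-v^{p^n-1}\big)/p^n=(w^{p^n-1}-1)/p^n$ with $w=u/v$ (consistent with $w_1=u_1/v_1\mapsto w^{p-1}$), and in each summand of the remaining sum the exponents of $v$ vanish because $(p^i-1)+(p^n-p^i)-(p^n-1)=0$. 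This leaves
\[
\varphi_n=\frac{w^{p^n-1}-1}{p^n}-\sum_{i=1}^{n-1}\frac{\varphi_{n-i}^{p^i}}{p^i}.
\]
Clearing denominators and reindexing $j=n-i$ puts this in the stated form $\varphi_n=p^{-n}\big(w^{p^n-1}-p^{n-1}\varphi_{n-1}^p-\cdots-p\varphi_1^{p^{n-1}}-1\big)$, with the $n=1$ case (empty sum) giving the base case $\varphi_1=(w^{p-1}-1)/p$. Integrality of the $\varphi_n$ as semistable numerical polynomials is not something I would need to reprove here, as it already follows from the basis statement in the preceding corollary; the only remaining work is the exponent bookkeeping, which telescopes exactly as indicated.
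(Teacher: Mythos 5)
Your proof is correct and follows essentially the same route as the paper: both arguments rest on computing $\lambda_n \mapsto v_1^{(p^n-1)/(p-1)}/p^n$ and $\eta_R(\lambda_n)\mapsto u_1^{(p^n-1)/(p-1)}/p^n$ from the Hazewinkel recursion together with the vanishing of $v_k$ and $\eta_R(v_k)$ for $k\ge 2$, then solving the right-unit formula $\eta_R(\lambda_n)=\sum_i \lambda_i t_{n-i}^{p^i}$ for $t_n$ and rescaling by $v_1^{-(p^n-1)/(p-1)}$. The only (harmless) difference is that you obtain $\eta_R(\lambda_n)$ directly by the right-unit analogue of the lemma, whereas the paper's intermediate proposition reaches the same relation by first solving the Hazewinkel formula for $v_n$, applying $\eta_R$, and running a separate induction.
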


We will work out a few examples explicitly and then prove the theorem. Firstly, one has
\[
p\lambda_1 = v_1
\]
and so 
\[
\lambda_1 = \frac{v_1}{p}.
\]
We will write $u_n$ for $\eta_R(v_n)$. This is justified because in $E(1)_*E(1)$, $\eta_R(v_1)$ is $u_1$. Applying $\eta_R$ gives
\[
\eta_R(v_1/p) = \eta_R(\lambda_1) = t_1+\lambda_1
\]
and so 
\[
u_1 = \eta_R(v_1) = pt_1+v_1
\]
Thus
\[
t_1 = \frac{u_1-v_1}{p}
\]
and so 
\[
\varphi_1 = \frac{w^{p-1}-1}{p}.
\]
To get at $\varphi_2$, we need to compute $\eta_R(v_2)$. We have
\[
p\lambda_2 = v_2+ \lambda_1v_1^p
\]
and so 
\[
v_2 = p\lambda_2 - \frac{v_1^{p+1}}{p}.
\]
Applying $\eta_R$ we get
\[
u_2 = p(t_2+\lambda_1t_1^p+\lambda_2) - \frac{u_1^{p+1}}{p}.
\]
Rewriting this, we get
\[
u_2 = pt_2 + v_1t_1^p+v_2+\frac{v_1^{p+1}}{p} - \frac{u_1^{p+1}}{p}.
\]
Tensoring with $BP\langle 1\rangle_*$ produces the following relation in $E(1)_*BP\langle 1\rangle$:
\[
0 = pt_2+v_1t_1^p+\frac{v_1^{p+1}}{p} - \frac{u_1^{p+1}}{p}
\]
and hence
\[
t_2= \frac{u_1^{p+1}-v_1^{p+1}}{p^2} - \frac{v_1t_1^p}{p}.
\]
Multiplying by $v_1^{-p-1}$ gives
\[
v_1^{-p-1}t_2 = \frac{w_1^{p+1}-p(v_1^{-1}t_1)^p-1}{p^2}
\]
which shows that
\[
\varphi_2= \frac{w^{p^2-1}-p\varphi_1^p-1}{p^2}.
\]

We will need the following lemma

\begin{lem}
In $E(1)_*\tBP{1}$ there is the following equality
\[
\lambda_n = \frac{v_1^{\frac{p^n-1}{p-1}}}{p^n}
\]
\end{lem}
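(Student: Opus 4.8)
The plan is to read the value of $\lambda_n$ directly off the Hazewinkel recursion
\[
p\lambda_n = \sum_{0\leq i <n}\lambda_i v_{n-i}^{p^i},
\]
exploiting the fact that almost every term on the right-hand side dies once we pass to $E(1)_*\tBP{1}$. Since $\pi_*E(1) = \Z_{(p)}[v_1^{\pm 1}]$, the image of $v_j$ under the left unit vanishes for every $j\geq 2$; pushing forward along $\eta_L$ into $E(1)_*\tBP{1}$ (working in the rationalization, where the $\lambda_i$ and the quotients by powers of $p$ make sense, exactly as in the $\varphi_1,\varphi_2$ computations above), we therefore have $v_j = 0$ for all $j\geq 2$. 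In the sum, the factor $v_{n-i}^{p^i}$ then survives only when $n-i=1$, i.e.\ when $i=n-1$, so the recursion collapses to the single-term relation
\[
\lambda_n = \frac{\lambda_{n-1}\,v_1^{p^{n-1}}}{p}.
\]

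With this one-term recursion in hand, I would finish by induction on $n$. The base case $n=1$ reads $p\lambda_1 = \lambda_0 v_1 = v_1$ (using the normalization $\lambda_0 = 1$), so $\lambda_1 = v_1/p = v_1^{(p-1)/(p-1)}/p$, matching the claimed formula. For the inductive step, assuming $\lambda_{n-1} = v_1^{(p^{n-1}-1)/(p-1)}/p^{n-1}$, the collapsed recursion gives
\[
\lambda_n = \frac{1}{p}\cdot\frac{v_1^{(p^{n-1}-1)/(p-1)}}{p^{n-1}}\cdot v_1^{p^{n-1}} = \frac{v_1^{\frac{p^{n-1}-1}{p-1}+p^{n-1}}}{p^n}.
\]
The only arithmetic is the exponent identity $\frac{p^{n-1}-1}{p-1}+p^{n-1} = \frac{p^n-1}{p-1}$, which follows by clearing denominators, since $p^{n-1}-1 + p^{n-1}(p-1) = p^n-1$. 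This yields $\lambda_n = v_1^{(p^n-1)/(p-1)}/p^n$, as desired.

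The content of the argument is really just the collapse of the Hazewinkel sum, so the only point requiring care is the vanishing $v_j = 0$ for $j\geq 2$: one must check that it is the left-unit (module-structure) copy of the $v_j$ that appears in the recursion defining $\lambda_n$, which it is, since $\lambda_n$ is a polynomial in the $v_j$ taken over the left unit. Once the single surviving term has been identified, the remaining induction is elementary bookkeeping, and there is no genuine obstacle.
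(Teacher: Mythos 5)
Your proof is correct and follows essentially the same route as the paper: both collapse the Hazewinkel recursion using $v_k=0$ for $k>1$ in $E(1)_*\tBP{1}$ to get $p\lambda_n=\lambda_{n-1}v_1^{p^{n-1}}$, then induct. Your added remarks about working rationally and about the left-unit copy of the $v_j$ are sensible points of care but do not change the argument.
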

\begin{proof}
This follows from the identity 
\[
p\lambda_n = \sum_{0\leq i<n} \lambda_iv_{n-i}^{p^i}
\]
and the fact that in $E(1)_*BP\langle 1\rangle$, $v_k=0$ for $k>1$. Thus $p\lambda_n = \lambda_{n-1}v_1^{p^{n-1}}$. Proceeding inductively gives the identity
\[
\lambda_n = \frac{v_1^{p^{n-1}+p^{n-2}+\cdots + p +1}}{p^n} = \frac{v_1^{\frac{p^n-1}{p-1}}}{p^n}.
\]
\end{proof}

We will prove our theorem from the following proposition.

\begin{prop}
In $E(1)_*BP\langle 1\rangle$, there is the relation
\[
pt_n + \sum_{1\leq i\leq n}\frac{v_1^{\frac{p^i-1}{p-1}}t_{n-i}^{p^i}}{p^{i-1}} = \frac{u_1^{\frac{p^n-1}{p-1}}}{p^{n-1}}.
\]
\end{prop}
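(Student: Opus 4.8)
The plan is to compute $\eta_R(\lambda_n)$ inside $E(1)_*\tBP{1}$ in two genuinely different ways and then equate the results. On one side the key input is the right unit formula $\eta_R(\lambda_n) = \sum_{0\le i\le n}\lambda_i t_{n-i}^{p^i}$, and on the other side I would establish a ``right-handed'' analogue of the preceding lemma. The pattern to imitate is exactly the explicit $n=2$ computation carried out above, where $\eta_R(\lambda_2)$ was evaluated both via the right unit formula and via the recursion after killing $v_2$.

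First I would expand the right unit formula using the lemma. Substituting $\lambda_i = v_1^{\frac{p^i-1}{p-1}}/p^i$, which is valid for all $i\ge 0$ (including the degenerate case $\lambda_0 = 1$, since the exponent and power of $p$ both vanish), into $\eta_R(\lambda_n) = \sum_{0\le i\le n}\lambda_i t_{n-i}^{p^i}$ yields
\[
\eta_R(\lambda_n) = \sum_{0\le i\le n}\frac{v_1^{\frac{p^i-1}{p-1}}}{p^i}\,t_{n-i}^{p^i}.
\]
Second, I would compute the same element directly. Applying the (rationally extended) ring homomorphism $\eta_R$ to the defining recursion $p\lambda_n = \sum_{0\le i<n}\lambda_i v_{n-i}^{p^i}$ gives $p\,\eta_R(\lambda_n) = \sum_{0\le i<n}\eta_R(\lambda_i)\,u_{n-i}^{p^i}$, where $u_k := \eta_R(v_k)$. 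Since $u_k = 0$ in $E(1)_*\tBP{1}$ for every $k\ge 2$, all terms with $n-i\ge 2$ vanish and only the $i=n-1$ summand survives, leaving $p\,\eta_R(\lambda_n) = \eta_R(\lambda_{n-1})\,u_1^{p^{n-1}}$. This is precisely the recursion from the proof of the lemma with $v_1$ replaced by $u_1$, so the same induction, based on $\eta_R(\lambda_1) = u_1/p$, produces
\[
\eta_R(\lambda_n) = \frac{u_1^{\frac{p^n-1}{p-1}}}{p^n}.
\]

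Finally I would equate the two expressions, multiply through by $p$, and split off the $i=0$ term of the sum, which contributes $p t_n$ because $\lambda_0 = t_0 = 1$; collecting the remaining summands gives
\[
p t_n + \sum_{1\le i\le n}\frac{v_1^{\frac{p^i-1}{p-1}}}{p^{i-1}}\,t_{n-i}^{p^i} = \frac{u_1^{\frac{p^n-1}{p-1}}}{p^{n-1}},
\]
which is the asserted relation. I expect the only real obstacle to be conceptual rather than computational: because $\eta_R$ is a map out of $BP_*$ and not an endomorphism of $E(1)_*\tBP{1}$, one cannot literally ``apply $\eta_R$'' to the identity of the lemma, so the second computation must be run as an independent induction. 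The care needed is in keeping the two systems of reductions separate, the left-hand vanishing $v_k = 0$ (used through the lemma) versus the right-hand vanishing $u_k = \eta_R(v_k) = 0$ for $k\ge 2$ (used in the direct recursion); once that bookkeeping is organized correctly, the remaining exponent arithmetic $p^{n-1} + \frac{p^{n-1}-1}{p-1} = \frac{p^n-1}{p-1}$ is routine.
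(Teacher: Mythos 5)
Your proof is correct and follows essentially the same route as the paper: both apply $\eta_R$ to the Hazewinkel recursion, kill $u_k=\eta_R(v_k)$ for $k\geq 2$ in $E(1)_*\tBP{1}$, and reduce everything to the identity $\sum_{0\leq i\leq n}\lambda_i t_{n-i}^{p^i} = u_1^{\frac{p^n-1}{p-1}}/p^n$. The only difference is organizational: you establish that identity by a standalone induction on $\eta_R(\lambda_n)$ (mirroring the lemma with $u_1$ in place of $v_1$), whereas the paper folds it into the induction on the proposition itself by substituting the inductive formula for $t_{n-1}$; your packaging is tidier but mathematically equivalent.
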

\begin{proof}
The formula for the Hazewinkel generators is
\[
p\lambda_n = v_n + \sum_{1\leq i\leq n-1}\lambda_iv_{n-i}^{p^i}
\]
whereby
\[
v_n = p\lambda_n - \sum_{1\leq i\leq n-1}\lambda_iv_{n-i}^{p^i}.
\]
Applying $\eta_R$ then gives
\[
u_n = p\sum_{0\leq i \leq n}\lambda_it_{n-i}^{p^i} - \sum_{1\leq i \leq n-1}\left(\sum_{0\leq j\leq i}\lambda_jt_{i-j}^{p^j}\right)u_{n-i}^{p^i}.
\]
In $E(1)_*BP\langle 1\rangle$, the $u_k$ are zero for $k>1$. So this gives
\[
p\sum_{0\leq i \leq n}\lambda_i t_{n-i}^{p^i} = \sum_{0\leq j\leq n-1}\lambda_jt_{n-1-j}^{p^j}u_1^{p^{n-1}}.
\]
Using the previous lemma, we can rewrite this as 
\begin{align}\label{eq:1}
p\sum_{0\leq i \leq n}\frac{v_1^{\frac{p^i-1}{p-1}}}{p^i} t_{n-i}^{p^i} = \left(\sum_{0\leq j\leq n-1}\frac{v_1^{\frac{p^j-1}{p-1}}}{p^j}t_{n-1-j}^{p^j}\right) u_1^{p^{n-1}}.
\end{align}
We will proceed inductively, the base case being trivial to check. Suppose that we have shown the formula for $n-1$. To complete the induction, it is enough to show that 
\[
\sum_{0\leq j\leq n-1}\frac{v_1^{\frac{p^j-1}{p-1}}}{p^j}t_{n-1-j}^{p^j} = \frac{u_1^{\frac{p^{n-1}-1}{p-1}}}{p^{n-1}}.
\]
Plugging in our inductive formula for $t_{n-1}$: 
\[
t_{n-1} = \frac{u_1^{\frac{p^{n-1}-1}{p-1}}}{p^{n-1}} - \sum_{1\leq k \leq n-1}\frac{v_1^{\frac{p^k-1}{p-1}}}{p^k}t_{n-1-k}^{p^k}
\]
into the right hand side of equation \eqref{eq:1} yields
\[
\frac{u_1^{\frac{p^{n-1}-1}{p-1}}}{p^{n-1}} - \sum_{1\leq k \leq n-1}\frac{v_1^{\frac{p^k-1}{p-1}}}{p^k}t_{n-1-k}^{p^k} + \sum_{1\leq k \leq n-1}\frac{v_1^{\frac{p^j-1}{p-1}}}{p^j}t_{n-1-j}^{p^j} = \frac{u_1^{\frac{p^{n-1}-1}{p-1}}}{p^{n-1}}
\]
which completes the proof.
\end{proof}

We now prove the theorem

\begin{proof}[Proof of Theorem]
By definition, 
\[
\varphi_n:= \varphi\left(v_1^{-\frac{p^n-1}{p-1}}t_n\right).
\]
Observe that
\[
\frac{p^n-1}{p-1} = p^j\frac{p^{n-j}-1}{p-1} + \frac{p^j-1}{p-1}.
\]
This and the proposition then show that 
\[
v_1^{-\frac{p^n-1}{p-1}}t_n = \frac{w_1^{\frac{p^n-1}{p-1}}}{p^n} - \sum_{0<j\leq n}\frac{v_1^{\frac{p^j-1}{p-1}}v_1^{-\frac{p^n-1}{p-1}}}{p^j}t_{n-j}^{p^j} = \frac{w_1^{\frac{p^n-1}{p-1}}}{p^n} - \sum_{0<j\leq n}\frac{(v_1^{-\frac{p^{n-j}-1}{p-1}}t_{n-j})^{p^j}}{p^j}.
\]
Applying $\varphi$ now shows that $\varphi_n$ satisfies the recursive formula, by induction.
\end{proof}

\section{Comparison of the $\varphi_n$ and the $g_n$}

In this section we will let $p=2$, so that $E(1)$ is equivalent to $KU_{(2)}$ and $\tBP{1}$ is equivalent to $ku_{(2)}$. Thus the map $\varphi$ is an isomorphism providing $KU_0ku_{(2)}$ with the basis provided by the $\varphi_n$'s. In this section we compare this basis with the basis provided by the $g_n$'s. In particular we show that the bases are the same modulo higher Adams filtration.  

Recall that in the Adams spectral sequence computing $\pi_*BP$:
\[
\Ext_{\scr{A}_*}(H_*BP)\implies \pi_*BP^{\wedge}_2
\]
the elements $v_i$ have Adams filtration $1$. Also, in the ASS computing $BP_*BP$, 
\[
\Ext_{\scr{A}_*}( H_*(BP\wedge BP))\implies \pi_*(BP\wedge BP)^{\wedge}_2
\]
the elements detecting $t_i$ have Adams filtration $0$. Moreover, the map
\[
\varphi:E(1)_*BP\langle 1\rangle \to KU_*ku_{(2)}
\]
preserve Adams filtration. Therefore, as $\varphi_n$ is the image of $v_1^{-2^n+1}t_n$ under $\varphi$, we can conclude:

\begin{prop}
The Adams filtration of $\varphi_n$ is given by
\[
\AF(\varphi_n) = -(2^n-1).
\]
\end{prop}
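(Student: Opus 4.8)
The plan is to compute everything in the source $E(1)_*\tBP{1}$ and then transport it along $\varphi$. Since $\varphi$ preserves Adams filtration, it suffices to prove that $v_1^{-2^n+1}t_n \in E(1)_0\tBP{1}$ has Adams filtration exactly $-(2^n-1)$. The two inputs are the facts recorded above: $\AF(v_1)=1$, and $\AF(t_n)=0$, the latter because $t_n$ is detected by the filtration-zero class $\zeta_n^2$ in the (collapsing) Adams spectral sequence for $E(1)\wedge\tBP{1}$.

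First I would record the behavior of filtration under multiplication by $v_1^{\pm 1}$. Adams filtration is subadditive, $\AF(xy)\ge \AF(x)+\AF(y)$, so from $0=\AF(1)=\AF(v_1 v_1^{-1}) \ge \AF(v_1)+\AF(v_1^{-1})$ we get $\AF(v_1^{-1})\le -1$; since the spectral sequence for $E(1)$ is $v_1$-periodic, $v_1$ acts invertibly on $E_\infty$ and raises filtration by exactly $1$, so in fact $\AF(v_1^{-1})=-1$ and multiplication by $v_1^{j}$ shifts filtration by exactly $j$.

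Next I would pin down the filtration of the product. The detecting class of $t_n$ is $\zeta_n^2$ in filtration $0$, so the detecting class of $v_1^{-2^n+1}t_n$ is $v_1^{-2^n+1}\zeta_n^2$, which lies in filtration $-(2^n-1)$. It remains to see that this class is nonzero in $E_\infty=E_2$, and this is immediate once $v_1$ acts invertibly: since $\zeta_n^2$ has weight $2^n$, it lives in $M_1(2^{n-1})$, and by Proposition \ref{basis_Ext} the group $v_1^{-1}\Ext_{\scr{E}(1)_*}(M_1(2^{n-1}))$ is free of rank one over $\Z_{(2)}[v_1^{\pm 1}]$ on the generator $v_1^{2^{n-1}-1}\zeta_n^2$, which is a unit multiple of $\zeta_n^2$. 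Hence $v_1^{-2^n+1}\zeta_n^2$ is again a nonzero $\Z_{(2)}[v_1^{\pm 1}]$-generator, so $\AF(v_1^{-2^n+1}t_n)=-(2^n-1)$, and applying $\varphi$ yields $\AF(\varphi_n)=-(2^n-1)$.

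The step I expect to require the most care is the passage from the naive estimate to the exact value: a priori, multiplying a filtration-zero class by a negative power of $v_1$ could interact with a hidden $h_0$-divisibility (of the kind appearing in the $M_1(4)$ example, where $2^3v_1^{-3}\zeta_3^2=\zeta_1^8$) and land the element in a different filtration. Controlling this is exactly what the rank-one description of the $v_1$-inverted $\Ext$ groups, together with the collapse of the Adams spectral sequence, is there to guarantee; without the $v_1$-periodicity that makes multiplication by $v_1$ a filtration-shifting isomorphism on $E_\infty$, the exact additivity of $\AF$ on this product would not be automatic.
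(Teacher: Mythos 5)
Your proof is correct and follows essentially the same route as the paper, which simply records that $\AF(v_1)=1$, that $t_n$ is detected in filtration $0$, and that $\varphi$ preserves Adams filtration, and then concludes. The one thing you add — using the rank-one free $\Z_{(2)}[v_1^{\pm1}]$-module structure from Proposition \ref{basis_Ext} to see that multiplication by $v_1^{-2^n+1}$ shifts filtration by exactly $-(2^n-1)$ rather than merely at most that — is a justification the paper leaves implicit, and it is the right one.
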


The Adams filtration of the semistable numerical polynomial $g_n$ is given by (cf. section 2.3 of \cite{BOSS})
\[
\AF(g_n) = \alpha(n)-2n
\]
where $\alpha(n)$ denotes the number of 1's in the binary expansion of $n$. We will equate the $g_n$ with products of $\varphi_n$ modulo elements of higher Adams filtration. Write out $n$'s binary expansion
\[
n = n_0+n_12+ n_22^2+ \cdots + n_\ell 2^\ell,
\]
then 
\[
\AF(\varphi_1^{n_0}\varphi_2^{n_1}\cdots \varphi_\ell^{n_\ell}) = \sum_{i=0}^\ell n_i(1-2^{i+1}) = \alpha(n)-2n
\]
so $g_n$ and $\varphi_1^{n_0}\varphi_2^{n_1}\cdots\varphi_\ell^{n_\ell}$ have the same Adams filtration. We will prove the following.

\begin{prop}\label{psi_and_g}
Given $n$ and its dyadic expansion
\[
n = n_0+n_12+n_22^2+\cdots 
\]
we have that 
\[
g_n \equiv \varphi_1^{n_0}\varphi_2^{n_1}\cdots \mod \text{higher Adams filtration}.
\]
\end{prop}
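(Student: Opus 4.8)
The plan is to reduce everything to a computation with powers of $\varphi_1$. First I would observe that, after the change of coordinates $x=(w-1)/2$, the classical basis element becomes a binomial coefficient in $\varphi_1$: since at $p=2$ we have $\varphi_1=(w-1)/2=x$, the manipulation $(w-1)(w-3)\cdots(w-(2n-1))=2^n\,x(x-1)\cdots(x-n+1)$ gives
\[
g_n=\binom{\varphi_1}{n}=\frac{\varphi_1(\varphi_1-1)\cdots(\varphi_1-n+1)}{n!}.
\]
Thus $g_n$ is an explicit degree-$n$ polynomial in the single element $\varphi_1$, and the problem becomes: identify the leading term (lowest Adams filtration) of $\binom{\varphi_1}{n}$ and show it is $\varphi_1^{n_0}\varphi_2^{n_1}\cdots$.

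The key input is the behavior of the powers $\varphi_1^j$ in the associated graded of the Adams filtration. Here I would argue that $\varphi_1^j=v_1^{-j}t_1^j$ is detected by $v_1^{-j}\zeta_1^{2j}$: the class $\zeta_1^{2j}$ is $Q_0$- and $Q_1$-closed and nonzero in $M_*(ku;Q_0)=P(\zeta_1^2)$, so $t_1^j$ has Adams filtration $0$ and $\AF(\varphi_1^j)=-j$. Since $\zeta_1^{2j}$ has weight $2j$, it lies in the summand $v_1^{-1}\Ext_{\scr{E}(1)_*}(M_1(j))$, whose degree-$0$ contribution to $KU_0ku$ is, by Proposition \ref{basis_Ext} and Corollary \ref{basis_Ext_cor_2}, a single $h_0$-tower generated by $\varphi_1^{j_0}\varphi_2^{j_1}\cdots$ in Adams filtration $f(j):=\alpha(j)-2j$. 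As $\varphi_1^j$ has filtration $-j=f(j)+(j-\alpha(j))$, it sits exactly $j-\alpha(j)$ steps up this tower, whence
\[
\varphi_1^j\equiv 2^{\,j-\alpha(j)}\,\varphi_1^{j_0}\varphi_2^{j_1}\cdots \pmod{\text{higher Adams filtration}},
\]
which is precisely the type of $v_1$-inverted tower relation computed in the Example (the case $j=4$ there reads $2^3v_1^{-3}\zeta_3^2=\zeta_1^8$). I expect establishing this identity---pinning down both the filtration $-j$ and the exact $h_0$-height of $\varphi_1^j$, i.e.\ tying the monomial $\zeta_1^{2j}$ to the basis monomial $\zeta_1^{2j_0}\zeta_2^{2j_1}\cdots$---to be the main obstacle.

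With this in hand the theorem follows by bookkeeping. Extending $\AF$ to $KU_0ku\otimes\Q$ by $\AF(c\,y)=v_2(c)+\AF(y)$, I would write $\binom{\varphi_1}{n}=\tfrac{1}{n!}\varphi_1^n+\sum_{j<n}\tfrac{s(n,j)}{n!}\varphi_1^j$ with $s(n,j)\in\Z$ the Stirling numbers of the first kind, and examine each term. For the top term, $v_2(n!)=n-\alpha(n)$ cancels against the $2^{\,n-\alpha(n)}$ produced by the tower relation, so
\[
\tfrac{1}{n!}\varphi_1^n\equiv \varphi_1^{n_0}\varphi_2^{n_1}\cdots \pmod{\text{higher Adams filtration}}
\]
with filtration exactly $f(n)$, the stray odd unit $n!/2^{\,n-\alpha(n)}$ being invisible in the $\F_2$-associated graded. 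For $j<n$ the integrality $v_2(s(n,j))\geq 0$ forces $\AF\!\big(\tfrac{s(n,j)}{n!}\varphi_1^j\big)\geq f(n)+(n-j)>f(n)$, so these terms lie in strictly higher filtration. Comparing leading terms then yields $g_n\equiv\varphi_1^{n_0}\varphi_2^{n_1}\cdots$ modulo higher Adams filtration. As a consistency check, this argument also recovers $\AF(g_n)=f(n)$ internally, matching the filtration count recorded just before the statement.
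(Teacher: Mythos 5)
Your proposal is correct, and it reaches the result by a genuinely different route from the paper. Both arguments rest on the same two inputs: the identification $g_n=\binom{\varphi_1}{n}$ (equivalently $\varphi_1=(w-1)/2=x$ under the paper's change of coordinates) and the tower structure of $v_1^{-1}\Ext_{\scr{E}(1)_*}(M_1(j))$ from Proposition \ref{basis_Ext}. The assembly differs. The paper proves the tower relation only for $j$ a power of $2$ (Lemma \ref{psi_mod_AF}), establishes $g_n\equiv\varphi_1^n/n!$ by induction on $n$ using the recursion $g_{n+1}=g_n\cdot\frac{w-(2n+1)}{2(n+1)}$ and a $2$-adic estimate, and then factors $g_n\equiv g_1^{n_0}g_2^{n_1}g_{2^2}^{n_2}\cdots$ before invoking $g_{2^i}\equiv\varphi_{i+1}$. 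You instead prove the tower relation for arbitrary $j$, namely $\varphi_1^j\equiv 2^{\,j-\alpha(j)}\varphi_1^{j_0}\varphi_2^{j_1}\cdots$, directly from the fact that the degree-$0$ part of $v_1^{-1}\Ext(M_1(j))$ is a single $\Z_{(2)}$-tower with bottom class the basis monomial in filtration $\alpha(j)-2j$, and then dispose of the binomial coefficient by a leading-term analysis of the Stirling expansion $\binom{\varphi_1}{n}=\frac{1}{n!}\sum_j s(n,j)\varphi_1^j$. Your filtration estimates are right: the $j$-th term has $\AF\geq(\alpha(n)-2n)+(n-j)$, so only $j=n$ survives, and the odd unit $n!/2^{\,n-\alpha(n)}$ is invisible in the $\F_2$-associated graded; your general-$j$ tower relation subsumes the paper's Lemma \ref{psi_mod_AF} and Corollary \ref{g_phi1} in one stroke. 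What your route buys is a proof with no induction on $n$ and no multiplicative decomposition of $g_n$; what it costs is that the full weight of the argument lands on the assertion that $\zeta_1^{2j}$ is $v_1$-nontorsion in $\Ext(M_1(j))$ and sits exactly $j-\alpha(j)$ steps above the bottom of the localized tower. You correctly flag this as the main point to nail down; it is justified by the identification $M_1(j)^*\simeq\overline{\scr{E}(1)}^{\otimes(j-\alpha(j))}\oplus F$, which is the same input the paper's Lemma \ref{psi_mod_AF} uses in the special case $j=2^{n-1}$, so your argument is no less rigorous than the paper's at this step.
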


To prove this proposition, we will need to prove several lemmas, which is done below.

\begin{lem}\label{psi_mod_AF}
We have 
\[
\varphi_n\equiv \frac{\varphi_1^{2^{n-1}}}{2^{2^{n-1}-1}} \mod \text{higher Adams filtration}.
\]
\end{lem}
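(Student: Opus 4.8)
The plan is to prove the congruence by induction on $n$, using the recursive formula for $\varphi_n$ established in Theorem \ref{psi's} together with the Adams filtration data computed above. Recall that we have
\[
\varphi_n = \frac{w^{2^n-1} - 2^{n-1}\varphi_{n-1}^2 - \cdots - 2\varphi_1^{2^{n-1}} - 1}{2^n},
\]
and that $\AF(\varphi_k) = -(2^k-1)$ for each $k$. The key observation is that among all the terms appearing in this recursion, the term $2\varphi_1^{2^{n-1}}/2^n = \varphi_1^{2^{n-1}}/2^{2^{n-1}-1}$ (after dividing by $2^n$) should carry the \emph{lowest} Adams filtration, so that every competing term is negligible modulo higher Adams filtration.

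First I would establish the base case $n=1$ (where the statement is a tautology, $\varphi_1 \equiv \varphi_1$) and ideally $n=2$ from the explicit formula $\varphi_2 = (w^3 - 2\varphi_1^2 - 1)/4$, to make the filtration bookkeeping transparent. Then, for the inductive step, I would compute the Adams filtration of each summand in the numerator of the recursion after division by $2^n$. The generic term is $2^{j}\varphi_{n-j}^{2^{j}}/2^n$ for appropriate indices; using $\AF(\varphi_{n-j}) = -(2^{n-j}-1)$, multiplicativity of Adams filtration under products, and the fact that multiplication by $2$ raises Adams filtration by $1$, I would compute $\AF$ of each term and compare. The claim reduces to showing that the term corresponding to $\varphi_1^{2^{n-1}}$ strictly minimizes the Adams filtration, so that all other terms — including the contributions of $w^{2^n-1}$ and the constant — lie in strictly higher filtration. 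Combining the inductive hypothesis $\varphi_{n-1} \equiv \varphi_1^{2^{n-2}}/2^{2^{n-2}-1}$ with this comparison then yields the stated congruence.

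The main obstacle I anticipate is the careful filtration accounting: I must pin down the Adams filtration contributed by the powers of $2$ in the denominator $2^n$ relative to the filtration of the polynomial numerator terms, and verify that the inequality between the $\varphi_1^{2^{n-1}}$ term and all other terms is strict. In particular, the element $w^{2^n-1}$ and the constant $1$ must be shown to contribute terms of strictly higher filtration than $\varphi_1^{2^{n-1}}/2^{2^{n-1}-1}$ after the division; this is where the precise relationship between powers of $2$ and Adams filtration in $KU_0ku_{(2)}$ (the fact that $v_1$-inverted $\Ext$ towers encode divisibility by $2$ as filtration jumps, as illustrated in the Example of Section 2) must be invoked. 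Once the strict minimality of the $\varphi_1^{2^{n-1}}$ term is secured, the induction closes cleanly.
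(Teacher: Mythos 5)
Your strategy (induction on $n$ using the recursion of Theorem \ref{psi's} plus Adams filtration bookkeeping) is not the paper's route --- the paper simply reads the relation $2^{2^{n-1}-1}v_1^{-(2^{n-1}-1)}t_n = t_1^{2^{n-1}}$ off the structure of $v_1^{-1}\Ext_{\scr{E}(1)}(M_1(2^{n-1}))$ from Proposition \ref{basis_Ext} and applies $\varphi$ --- but your route could in principle work. However, as written it contains a genuine error. The identity you call the key observation, $2\varphi_1^{2^{n-1}}/2^n = \varphi_1^{2^{n-1}}/2^{2^{n-1}-1}$, is false for $n\geq 3$: the left side is $\varphi_1^{2^{n-1}}/2^{n-1}$, and $n-1\neq 2^{n-1}-1$ once $n\geq 3$. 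Worse, that term is \emph{not} the one of lowest Adams filtration. The $j$-th term of the recursion, after dividing by $2^n$, is $\varphi_j^{2^{n-j}}/2^{n-j}$ with
\[
\AF\left(\frac{\varphi_j^{2^{n-j}}}{2^{n-j}}\right) = -2^n + 2^{n-j} - (n-j),
\]
and $2^{m}-m\leq 1$ only for $m=n-j\leq 1$. So the unique term realizing the minimal filtration $-(2^n-1)$ is $j=n-1$, namely $2^{n-1}\varphi_{n-1}^2/2^n=\varphi_{n-1}^2/2$; your $\varphi_1$-term sits at filtration $-2^{n-1}-(n-1)$, which is strictly \emph{higher} for $n\geq 3$ (check $n=3$: the term $\varphi_1^4/4$ has filtration $-6$ while $\AF(\varphi_3)=-7$).

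The fix keeps your framework: conclude from the filtration count that $\varphi_n\equiv \varphi_{n-1}^2/2$ modulo higher Adams filtration (the sign is irrelevant since $-x-x=-2x$ has higher filtration), then feed in the inductive hypothesis $\varphi_{n-1}\equiv \varphi_1^{2^{n-2}}/2^{2^{n-2}-1}$ to get $\varphi_1^{2^{n-1}}/2^{2(2^{n-2}-1)+1}=\varphi_1^{2^{n-1}}/2^{2^{n-1}-1}$. You should also make explicit that the comparison takes place in $KU_0ku\otimes\Q$ with the extended filtration $\AF(x/2^i)=\AF(x)-i$ (as the paper sets up in the proof of Lemma \ref{g_mod_AF}), and that $\AF(w^{2^n-1}/2^n)=-n$ and $\AF(1/2^n)=-n$ exceed $-(2^n-1)$ for $n\geq 2$, which disposes of the remaining two terms. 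With those corrections your argument closes; it is longer than the paper's, but it has the virtue of using only the explicit recursion for the $\varphi_n$ rather than the $\Ext$-chart relation.
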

\begin{proof}
The map $\varphi$ preserves Adams filtration. Moreover, from Proposition \ref{basis_Ext},  in $v_1^{-1}\Ext(M_1(2^{n-1}))$, there is the relation
\[
2^{2^{n-1}-1}v_1^{-(2^{n-1}-1)}t_n = t_1^{2^{n-1}}.
\]
Multiplying by $v_1^{-2^{n-1}}$ and applying $\varphi$ gives the desired relation. 
\end{proof}

\begin{lem}\label{g_mod_AF}
We have 
\[
g_{n}\equiv \frac{\varphi_1^n}{n!} \mod \text{higher Adams filtration} .
\]
\end{lem}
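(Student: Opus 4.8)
The plan is to reduce the statement to the single identity $g_n = \binom{\varphi_1}{n}$ and then to read off its leading term. First I would rewrite $g_n$ in terms of $\varphi_1$. Since $p=2$ gives $\varphi_1 = (w-1)/2$ by Theorem \ref{psi's}, each factor in the numerator of $g_n$ becomes $w-(2k-1) = 2(\varphi_1-(k-1))$, so that
\[
g_n = \frac{\prod_{k=1}^n 2(\varphi_1-(k-1))}{2^n n!} = \frac{\varphi_1(\varphi_1-1)\cdots(\varphi_1-(n-1))}{n!} = \binom{\varphi_1}{n}.
\]
This is just the binomial-coefficient basis transported through the change of coordinates $w=2\varphi_1+1$, so it introduces no new input; it merely repackages the definition of $g_n$.

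Next I would expand the falling factorial. Writing $\prod_{k=0}^{n-1}(\varphi_1-k) = \varphi_1^n + \sum_{j=1}^{n-1} c_j\varphi_1^j$, where the $c_j$ are (signed) Stirling numbers of the first kind and in particular integers, I would multiply the asserted congruence through by $n!$. This turns the claim into the \emph{integral} statement $n!\,g_n \equiv \varphi_1^n$ modulo higher Adams filtration, with error term $E:=\sum_{j=1}^{n-1} c_j\varphi_1^j$. This is the same manoeuvre used in Lemma \ref{psi_mod_AF}: a congruence written with denominators is understood after clearing them, equivalently in the extension of the Adams filtration to $KU_0ku_{(2)}\otimes\Q$ in which division by $2$ lowers filtration by exactly $1$.

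It then remains to bound the Adams filtration of $E$. Using $\AF(\varphi_1)=-1$ from the proposition preceding Lemma \ref{psi_mod_AF}, submultiplicativity of the Adams filtration on the ring $KU_0ku_{(2)}$, and $\nu_2(c_j)\geq 0$, every summand satisfies $\AF(c_j\varphi_1^j)\geq \nu_2(c_j)-j\geq -j\geq -(n-1)$, whence $\AF(E)\geq -(n-1) > -n = \AF(\varphi_1^n)$. Therefore $n!\,g_n\equiv\varphi_1^n$ modulo higher filtration. Dividing back by $n!$, whose $2$-adic valuation is $n-\alpha(n)$, shifts every filtration degree by the same amount $-(n-\alpha(n))$; this simultaneously recovers $\AF(\varphi_1^n/n!) = \alpha(n)-2n = \AF(g_n)$ and gives $g_n\equiv\varphi_1^n/n!$ modulo filtration strictly above $\alpha(n)-2n$, which is exactly the assertion.

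The only genuine obstacle here is bookkeeping rather than mathematics: one must fix conventions so that the non-integral expression $\varphi_1^n/n!$ carries a well-defined Adams filtration and so that division by $n!$ interacts correctly with that filtration. I would handle this precisely as in Lemma \ref{psi_mod_AF}, phrasing the comparison in the associated graded (the $v_1$-inverted $\Ext$ group of the collapsed Adams spectral sequence), where $\varphi_1^n$ and $n!\,g_n$ manifestly have the same leading class while the lower-order terms of $E$ die for filtration reasons; the odd part of $n!$ is a $2$-local unit and so contributes trivially to the associated graded, which is an $\F_2$-vector space.
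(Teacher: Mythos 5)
Your argument is correct, but it is genuinely different from the paper's. The paper proves this lemma by induction on $n$, using the recursion $g_{n+1} = g_n\cdot\frac{w-(2n+1)}{2(n+1)}$ and showing at each step that the new factor agrees with $\frac{\varphi_1}{n+1}$ modulo higher filtration (the difference being $\frac{n}{n+1}$, whose filtration $\nu_2(n)-\nu_2(n+1)$ exceeds $-1-\nu_2(n+1)$). You instead argue in one shot: the identity $g_n=\binom{\varphi_1}{n}$ is immediate from the change of coordinates $w=2\varphi_1+1$, and then the integrality of the Stirling numbers $c_j$ together with $\AF(\varphi_1)=-1$ and superadditivity of filtration under products gives $\AF(c_j\varphi_1^j)\geq -j\geq -(n-1)>-n$ for every lower-order term, so $n!\,g_n\equiv\varphi_1^n$ and dividing by $n!$ (shifting all filtrations uniformly by $-(n-\alpha(n))$, the odd part being a $2$-local unit) yields the claim. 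Your route is arguably cleaner: it isolates exactly where the estimate comes from (integrality of the Stirling coefficients plus one filtration bound) and avoids the paper's step-by-step bookkeeping with $\nu_2(n+1)$; the paper's induction, on the other hand, stays entirely within the recursive structure of the $g_n$ that is reused elsewhere in the comparison. Both proofs rest on the same foundational conventions (the extension of $\AF$ to $KU_0ku\otimes\Q$ with $\AF(x/2^i)=\AF(x)-i$ and the injectivity of $KU_*ku\to KU_*ku\otimes\Q$ on associated gradeds), which you correctly flag as the only point requiring care.
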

\begin{proof}
We prove this by induction on $n$. Note that $g_1 = \psi_1$. Suppose that we have shown that 
\[
g_n \equiv \frac{\varphi_1^n}{n!} \mod \text{higher Adams filtration}.
\]
Note that 
\[
g_{n+1} = g_n\cdot \frac{w-(2n+1)}{2(n+1)}.
\]
Even though $\frac{w-(2n+1)}{2(n+1)}$ is not an element of $KU_0ku_{(2)}$, it is an element of $KU_0ku\otimes \Q$. We will show that in $KU_0ku\otimes \Q$, the element $g_{n+1}$ is congruent to $\frac{\varphi_1^{n+1}}{(n+1)!}$ modulo higher Adams filtration in $KU_0ku\otimes \Q$, where Adams filtration is extended to $KU_0ku\otimes \Q$ by setting
\[
\AF\left(\frac{x}{2^i}\right) = \AF(x)-i.
\] 
This will complete the induction process because the map
\[
KU_*ku\to KU_*ku\otimes \Q
\]
preserves Adams filtration and is monic, inducing a monomorphism on associated graded spaces
\[
E^0KU_*ku\to E^0KU_*ku\otimes \Q.
\]
Note that 
\[
\AF(g_{n+1}) = \alpha(n+1)-2n-2
\]
and also that 
\[
\AF\left(\frac{w-(2n+1)}{2(n+1)}\right) = \alpha(n+1)-\alpha(n)-2.
\]
From the formula
\[
\nu_2(n!) = n-\alpha(n)
\]
we find
\[
\nu_2(n+1) = \nu_2((n+1)!)-\nu_2(n!) = 1 - \alpha(n+1)-\alpha(n).
\]
Thus
\[
\AF\left(\frac{w-(2n+1)}{2(n+1)}\right) = \AF\left(\frac{\varphi_1}{n+1}\right) = -1-\nu_2(n+1).
\]
This suggests that these numerical polynomials might be equivalent modulo higher adams filtration. Indeed, 
\[
\frac{w-(2n+1)}{2(n+1)} - \frac{w-1}{2(n+1)} = \frac{2n}{2(n+1)} = \frac{n}{n+1}
\]
and 
\[
\AF\left(\frac{n}{n+1}\right) = \nu_2(n)-\nu_2(n+1) > -1-\nu_2(n+1)
\]
and so, in $E^0(KU_*ku\otimes \Q)$,
\[
\frac{w-(2n+1)}{2(n+1)} \equiv \frac{\varphi_1}{n+1}\mod \text{ higher Adams filtration}
\]
which implies that 
\[
g_{n+1}\equiv \frac{\varphi_1^{n+1}}{(n+1)!}\mod \text{higher Adams filtration}
\]
which completes the induction.
\end{proof}

\begin{cor}\label{g_phi1}
We have the following congruence
\[
g_n\equiv \frac{\varphi_1^n}{2^{n-\alpha(n)}} \mod \text{ higher Adams filtration}.
\]
\end{cor}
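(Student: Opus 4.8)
The plan is to deduce this directly from Lemma \ref{g_mod_AF} together with Legendre's formula for the $2$-adic valuation of a factorial, so that no new computation with the $\varphi_n$'s is required. First I would recall from Lemma \ref{g_mod_AF} that
\[
g_n\equiv \frac{\varphi_1^n}{n!} \mod \text{higher Adams filtration},
\]
so it suffices to compare the two rational elements $\frac{\varphi_1^n}{n!}$ and $\frac{\varphi_1^n}{2^{n-\alpha(n)}}$ inside $KU_0ku\otimes\Q$ and show they have the same leading term.

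Next I would invoke the formula $\nu_2(n!)=n-\alpha(n)$ already used in the proof of Lemma \ref{g_mod_AF}. This lets me factor $n!=2^{n-\alpha(n)}m$, where $m$ is the odd part of $n!$; in particular $m\in\Z_{(2)}^\times$ is a unit. Consequently
\[
\frac{\varphi_1^n}{n!}=\frac{1}{m}\cdot\frac{\varphi_1^n}{2^{n-\alpha(n)}},
\]
so the two elements differ only by the $2$-adic unit $1/m$.

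The key observation is that multiplying by this unit does not change the associated-graded leading term. Writing out the difference,
\[
\frac{\varphi_1^n}{n!} - \frac{\varphi_1^n}{2^{n-\alpha(n)}} = \frac{1-m}{m}\cdot\frac{\varphi_1^n}{2^{n-\alpha(n)}},
\]
I would note that $m$ is odd, so $1-m$ is even and $\nu_2\!\left(\frac{1-m}{m}\right)\geq 1$. Under the extended Adams filtration on $KU_0ku\otimes\Q$ defined by $\AF(x/2^i)=\AF(x)-i$, this extra factor of $2$ raises the Adams filtration strictly, so the displayed difference lies in higher Adams filtration. Chaining this with Lemma \ref{g_mod_AF} yields $g_n\equiv \frac{\varphi_1^n}{2^{n-\alpha(n)}}$ modulo higher Adams filtration, as claimed.

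I do not expect a genuine obstacle here, since the statement is essentially a reformulation of Lemma \ref{g_mod_AF}. The only point demanding care is the bookkeeping with the extended filtration on the rationalization: one must check that absorbing the odd unit $1/m$ produces a term of \emph{strictly} higher filtration rather than one of equal filtration, which is exactly what the inequality $\nu_2\!\left(\frac{1-m}{m}\right)\geq 1$ guarantees.
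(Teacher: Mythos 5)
Your proposal is correct and follows the same route as the paper: the corollary is deduced from Lemma \ref{g_mod_AF} together with the identity $\nu_2(n!)=n-\alpha(n)$, which is exactly the paper's (one-line) argument. You merely make explicit the point the paper leaves implicit, namely that dividing by the odd part $m$ of $n!$ only alters the element by a term of strictly higher Adams filtration.
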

\begin{proof}
This is because the 2-adic valuation of $n!$ is 
\[
\nu_2(n!) = n-\alpha(n).
\]
\end{proof}

\begin{lem}\label{g_and_psi_power_2}
There is the following congruence
\[
g_{2^n}\equiv \varphi_{n+1} \mod \text{higher Adams filtration}.
\]
\end{lem}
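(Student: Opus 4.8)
The plan is to obtain this congruence immediately by chaining together the two ``normal form'' results already established: Lemma \ref{psi_mod_AF}, which writes each $\varphi_m$ as a power of $\varphi_1$ divided by a power of $2$ modulo higher Adams filtration, and Corollary \ref{g_phi1}, which does the same for each $g_m$. Both results reduce their inputs to scalar multiples of powers of $\varphi_1$, so once I specialize the indices appropriately the entire problem collapses to checking that the two reduced expressions are literally the same.

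First I would apply Corollary \ref{g_phi1} at the index $2^n$. The binary expansion of $2^n$ consists of a single $1$, so $\alpha(2^n)=1$ and hence $2^n-\alpha(2^n)=2^n-1$, giving
\[
g_{2^n}\equiv \frac{\varphi_1^{2^n}}{2^{2^n-1}} \mod \text{higher Adams filtration}.
\]
Next I would apply Lemma \ref{psi_mod_AF} with $m=n+1$, whose exponent $2^{(n+1)-1}=2^n$ and denominator $2^{2^{(n+1)-1}-1}=2^{2^n-1}$ produce
\[
\varphi_{n+1}\equiv \frac{\varphi_1^{2^n}}{2^{2^n-1}} \mod \text{higher Adams filtration}.
\]
Since the two right-hand sides are identical, transitivity of congruence modulo higher Adams filtration yields $g_{2^n}\equiv \varphi_{n+1}$, as claimed.

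In an argument this short, the only point requiring care is that the two congruences are taken in a common setting so that they may be chained: both reductions live modulo higher Adams filtration in the associated graded $E^0(KU_0ku\otimes\Q)$ set up in the proof of Lemma \ref{g_mod_AF}, where the maps $KU_0ku\to KU_0ku\otimes\Q$ are filtration-preserving and monic. The ``hard part'' is therefore purely bookkeeping: confirming that the exponent $2^n$ and the $2$-adic valuation $2^n-1$ coming from Corollary \ref{g_phi1} at index $2^n$ agree exactly with those coming from Lemma \ref{psi_mod_AF} at $m=n+1$. Once this match is verified no further computation is needed, and I would present the proof as a two-line corollary of the preceding two statements.
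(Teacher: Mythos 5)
Your proposal is correct and follows exactly the paper's own argument: the paper likewise combines Corollary \ref{g_phi1} at index $2^n$ with Lemma \ref{psi_mod_AF} at $n+1$ and notes $\alpha(2^n)=1$ to match the two expressions. No differences worth noting.
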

\begin{proof}
The previous corollary gives that
\[
g_{2^n}\equiv \frac{\varphi_1^{2^n}}{2^{2^n-\alpha(2^n)}} \mod \text{higher Adams filtration}
\]
and by Lemma \ref{psi_mod_AF}
\[
\varphi_{n+1}\equiv \frac{\varphi_1^{2^n}}{{2^{2^n-1}}} \mod \text{higher Adams fitration}.
\]
Since $\alpha(2^n)=1$, this proves the lemma.
\end{proof}

We can now prove Proposition \ref{psi_and_g}

\begin{proof}[Proof of Proposition \ref{psi_and_g}]
First observe that if we take the binary expansion of $n$
\[
n = n_0+n_12+n_22^2+\cdots 
\]
then
\begin{equation} \label{eq:2}
g_n \equiv g_1^{n_0}g_2^{n_1}g_{2^2}^{n_2}\cdots \mod \text{higher Adams filtration}
\end{equation}
Indeed, by Corollary \ref{g_phi1}, we have the congruence
\[
g_n\equiv \frac{\varphi_1^{n}}{n!} \mod \text{ higher Adams filtration}
\]
and 
\[
g_1^{n_0}g_2^{n_1}g_{2^2}^{n_2}\cdots \equiv \left(\frac{\varphi_1}{2^0!}\right)^{n_0}\left(\frac{\varphi_1^2}{2^1!}\right)^{n_1}\left(\frac{\varphi_1^{2^2}}{2^2!}\right)^{n_2}\cdots \mod \text{higher Adams filtration}.
\]
In this last expression, the right hand side is equal to 
\[
\frac{\varphi_1^n}{(2^0!)^{n_0}(2^1!)^{n_1}(2^2!)^{n_2}\cdots }.
\]
So in order to show \eqref{eq:2}, it needs to be shown that 
\[
\nu_2(n!) = \nu_2((2^0!)^{n_0}(2^1!)^{n_1}(2^2!)^{n_2}\cdots).
\]
The right hand is equal to 
\[
\sum_i n_i\nu_2(2^i) = \sum_i n_i2^i-n_i = n - \alpha(n) = \nu_2(n!)
\]
and this proves the congruence \eqref{eq:2}. 

To prove the proposition, apply Corollary \ref{g_and_psi_power_2} to the right hand side of \eqref{eq:2}. This gives
\[
g_n \equiv \varphi_1^{n_0}\varphi_2^{n_1}\varphi_3^{n_2}\cdots 
\]
completing the proof of Proposition \ref{psi_and_g}.
\end{proof}

\bibliographystyle{plain}
\bibliography{basis_cooperations_Adams_summand} 

\end{document}